\def\NAT@def@citea{\def\@citea{\NAT@separator}}
\theoremstyle{plain}
\newtheorem{theorem}{Theorem}[section]
\newtheorem{lemma}[theorem]{Lemma}
\newtheorem{proposition}[theorem]{Proposition}
\theoremstyle{definition}
\newtheorem{definition}[theorem]{Definition}
\theoremstyle{remark}
\def\nR{\mathbb{R}}
\def\nC{\mathbf{C}}
\def\cB{\mathcal{B}}
\def\cG{\mathcal{G}}
\def\R{\textrm{R}}
\def\B{$\mathcal{B}_d$}
\def\para{\vskip0.25cm}
\newtheorem{ex}[theorem]{Example}
\newtheorem{obs}[theorem]{Remark}
\renewcommand{\p@enumii}{\theenumi.}
\numberwithin{equation}{section}
\begin{document}


\title{On  real Waring decompositions of real binary forms}

\author{
\name{M. Ansola\textsuperscript{a}\thanks{CONTACT: a) mansola@ucm.es; b) adiazcan@ucm.es; c) mangeles.zurro@uam.es }
, A. D\'{\i}az-Cano\textsuperscript{b} and M. A. Zurro\textsuperscript{c}}
\affil{
\textsuperscript{a}Universidad Complutense de Madrid, Madrid, Spain; \\
\textsuperscript{b}Universidad Complutense de Madrid. Facultad de Matem\'{a}ticas. IMI and Dpto. de \'{A}lgebra, Geometría y Topología, Madrid, Spain;
\thanks{Second author partially supported by Spanish MTM2014-55565 and Grupo UCM 910444.}
\thanks{Third author partially supported by Grupo UCM 910444.}
\\
\textsuperscript{c}Universidad Aut\'{o}noma de Madrid, Madrid, Spain}
}

\maketitle

\begin{abstract}
The Waring Problem over polynomial rings asks how to decompose a homogeneous polynomial $p$ of degree $d$ as a finite sum of $d$-{th} powers of linear forms. In this work  we give an algorithm  to obtain a real Waring decomposition of any given real binary form $p$ of length at most its degree. In fact, we construct a semialgebraic family of Waring decompositions for $p$. Some examples are shown to highlight the difference between the real and the complex case.
\end{abstract}

\begin{keywords}
Real binary forms;  Waring decompositions; Semialgebraic sets.   

\noindent MSC:  14P10, 12D10
\end{keywords}

\vspace{-5mm}



  \section{Introduction}\label{sec-Intro}

{Among the many questions raised in the correspondence of Euler-Goldbach, in the middle of the  $18^{th}$ century, we find what is known as "Euler's four-square identity". This identity was used by Lagrange to prove in 1770 his "four square theorem". That same year, E. Waring  proposed} in his work {\it Meditationes Algebraicae} (see the english translation \cite{Wa}) as a conjecture that every positive integer could be expressed as a sum of no more than 9 cubes, or as a sum of at most 19 fourth powers. It was D.  Hilbert in 1909 who definitely proved 
 that every positive integer is a finite sum of $n$-th powers of positive integers, where the number of summands depends on $n$ (see  \cite{Boy}). This classical Waring Problem can be extended to polynomial decompositions in this way: any homogeneous polynomial $p$ of degree $d$ in $n$ variables over a field can be written as the sum of  at most $d$-{th} powers of linear forms.

\para 

The existence of this type of polynomial decompositions  is a particular case of the general problem of symmetric tensor decompositions as a sum of rank one tensors. This problem is of great interest due to the numerous applications not only in Mathe\-ma\-tics, but also in  Engineering (see \cite{BCM} and the references therein), Algebraic {Statistics}, Biology, Data {Mining}, Machine l\textcolor{blue}{L}earning, \cite{CGLM}, \cite{KB}, \cite{La}, and  Theoretical  Physics, cf. \cite{BC}.

\para

From an algebraic point of view, there are many open problems. For example, in 2018, B. Shapiro et al., \cite{FLOS}, have proposed a very interesting list of open questions on these issues when the field of coefficients is $ \mathbb{C}$. {One of the longest studied aspects is to determine the length of this decompositions or find lower bounds for them. Since J. Alexander and A. Hirschowitz, \cite{AH}, numerous authors have developed this problem (see \cite{Ble} and the references therein). The relation between the number of real linear factors and the real Waring rank of binary forms has also been studied by several authors (see \cite{CO}, \cite{Tok}  or \cite{BM}). Finally, we can point out some works more focused on the algorithmic aspects such as  \cite{BFPT1}, \cite{BFPT2}, \cite{GMKP} or \cite{P}, for pointing out some of the most recent. }
\para

In this paper we present {a procedure} to compute a Waring decomposition for a given  binary form $ p (x, y) $ with real coefficients. Observe that, if we fix a degree $ d> 1 $, the problem of  rewriting  $ p $ as a sum of powers of linear forms can be formulated as follows.

\begin{minipage}[c]{0.12\textwidth}
$$
(WD)_m
$$
\end{minipage} \hfill
\begin{minipage}[t]{0.7\textwidth}
Let  $p(x,y)$  be a binary form of degree $d$ in $\mathbb{R}[x,y]$.  Determine linear forms $\ell_1 ,\dots ,\ell_m $ in  $\mathbb{R}[x,y]$ such that $p(x,y)= c_1 \ell_1^d + \cdots +c_m \ell_m^d$ for some $c_1 , \dots , c_m$ real numbers.
\end{minipage} \hfill
\begin{minipage}[c]{0.1\textwidth}
    \begin{equation}\label{eq-problem}
    \quad
    \end{equation}
\end{minipage}
\bigskip

The number $m$ in $(WD)_m  $ is called {\it the length} of the decomposition. This decomposition of $p$ (not necessa\-rily unique) is known as {\it a Waring decomposition} (expression that we will abbreviate from now on as WD) of the polynomial. 
 
 \para 

This family of problems is solved trivially for $ m = d + 1 $. It is known (see \cite{CO}) that this decomposition is always possible for $ m = d $. As A. Causa and R. Re in \cite{CR}, or E. Carlini et al. in \cite{CCG}, affirm, the real case becomes more complicated than the complex case. Also \cite{BB} emphasizes the importance of the real case for the applications. This real binary case has been recently investigated by different authors (for instance, \cite{BCG}, \cite{CO} or \cite{Re}).  We study similar problems  to those described in \cite{FLOS} over the complex field, although the techniques necessary to address them in the real case rely on  semialgebraic geometry techniques associated with the corresponding projective space of real binary forms of fixed degree. Other authors have also used semialgebraic geometry to address pro\-blems with tensors (see the recent article of P. Comon et al. \cite{CLQY}). 

\para

One of our contributions to the problem $(WD)_d $ is an algorithm, Algorithm \ref{alg-WD-new}, to compute  WD for $p$ of length at most $d$. For complex binary forms similar problems have been studied in \cite{BFPT1},  \cite{BFPT2} using different techniques. We include {an illustrative case} to compare the real and the complex approaches  in  Ex. \ref{ex-complex-real}.

\para

 The work we present focuses on the study of real binary forms  with the ultimate goal of obtaining  WD by computational routines. Moreover, our techniques give a semialgebraic family of such decompositions, (see Section \ref{sec-WD} and theo\-rems therein).

\para

{\it The paper is organized as follows}. In Section \ref{sec-WD},  Theorems \ref{thm-odd} and \ref{thm-even}  present the parametric behavior of  Waring decompositions in $ (WD)_d $ for a real binary form $p$ depending on the parity of its degree $ d $. We would like to point out that, on the real field, parity has important topological implications (see the work \cite{CLQY} on these issues) and a unified treatment does not seem natural, to our understanding. 
\para

Section \ref{algoritmo} includes an algorithm for computing such  WD  (see Algorithm \ref{alg-WD-new}). It is based on an effective method to choose an appropriate set of parameters (see Algorithm \ref{alg-eleccion-Si-impar} and Algorithm \ref{alg-eleccion-Si-par}). The correctness of the algorithms {is}  proved in Propositions \ref{correct-odd}, \ref{correct-even}, and Theorem  \ref{thm-correctness}. Illustrative examples are included in Section \ref{sec-example}.  We have used Maple 18 to perform these computations.



\section{Real Waring decomposition of length at most $d$}\label{sec-WD}

Let   \B $\;$be the real  vector space of real binary forms of degree $d$ in the variables $x,y$.
Let  $p(x,y)$ be a real binary form in \B,
\begin{equation}\label{defp}
p(x,y)=p_{\vec{c}}(x,y)= \sum_{i=0}^{d}                         
\left(
\begin{minipage}[c][9pt][b]{8pt}
$$  \begin{array}{c}
   \vspace{-4pt}
   \!\! \scriptstyle d\!\! \\
   \vspace{-4pt}
    \!\! \scriptstyle i  \!\!
  \end{array} $$ 
\end{minipage}
\right)
c_{i}\, x^{i}\, y^{d-i}, \quad \text{with } \vec{c}=(c_0,\ldots, c_d) \in \mathbb{R}^{d+1}\setminus\{\vec{0}\}
\end{equation}

A {\sl Waring Decomposition (WD) over $\mathbb{R}$ of length $r$ for $p$}  is any rewrite of the form $p$ as a linear combination  of $d$-th powers of linear forms $\ell_i =\alpha_i x+\beta_i y$, $i=1,\dots ,r$, say
\begin{equation}\label{desWaring}
  p(x,y)=\sum_{i=1}^{r}\lambda_i\ell_i^d \ , \quad \text{for some real numbers } \lambda_i \, .
\end{equation} \label{eq:WD}
We also require that this expression is not redundant, that is, $\ell_1 ,\dots ,\ell_r $ are pairwise non-proportional. The number $r$ is called {\sl the length of the WD}.  Moreover,  when the  expression \eqref{desWaring} has minimal length, $ r $ is called the {\sl real rank of $p$} .

\para

Let us fix a real binary form $p$ en $\cB_d$. In this section we present a procedure to compute a WD for $p$ with length at most $d$. This number is an upper bound for the real rank of $p$. This was proved in  \cite{CO}, Prop. 2.1, but { no explicit constructions were} given there. This  upper bound does not seem as sharp as Theorem 1.1. {in \cite{Re3},}  where the length of the WD for a {(complex)} binary form is bounded by $\ \frac{d+1}{2}\ $ or $\ \frac{d}{2}+1\ $, depending on whether $\ d\ $ is odd or even respectively. But it is important to notice that our statement refers to "any {real binary form}" while Sylvester talks about "a {(complex)} general binary form". This "genericity condition" fails, for instance, for the form $p(x,y)=ax^3+3bx^2y-3ax^2y-by^3$.  It is easy to check that can not be written as a sum of less than {three real} linear forms, while, according to the previous limit, the expected length should be $2$. This form will be studied in more detail in Example \ref{ex-complex-real}.

\para 

{Next we give a semialgebraic description of a family of WD for any real binary form. We will establish our results according to the parity of d. The proof is constructive, which provides a method to construct the WD that is stated}.

\para 
{
 \noindent\textbf{Notation.} We  use the following notation along this work: for a matrix $M$, the expression 
${M}^t$  denote its transpose. In particular, we use $\vec{\lambda}^t$ to refer to the column vector corresponding to the row vector $\vec{\lambda}$.
 }

\vspace{1cm}

\noindent \textbf{\textsc{Construction for  odd degrees.}}
\para

In this section we give a parametric presentation of the WD of a given real binary form $p$ of odd degree $d$. The parameter space is an open semialgebraic set $\mathcal{G}$ in $\mathbb{R}^{(d-1)/2}$. We can establish the following result.

\begin{theorem}\label{thm-odd}
Let $p(x,y)=p_{\vec{c}}(x,y)= \sum_{i=0}^{d}  \binom{d}{i}    c_{i}\, x^{i}\, y^{d-i}$ be a real binary form  of odd degree $d=2\nu+1$. Then, there exists an open and dense semialgebraic set $ \mathcal{G}$ in $\mathbb{R}^\nu$ such that $p$ has the following WD:
\begin{equation} \label{eq:WD_teo_impar}
p(x,y) = \sum_{j=1}^{\nu} \lambda_{2j-1} \,(x+s_j\, y)^{d} \, + \,  \sum_{j=1}^{\nu} \lambda_{2j} \,(x-s_j\, y)^{d} \, + \, \lambda_d (R_1 x +R_2 y )^d \; ,    
\end{equation}
with  $s_j , -s_j $ in $  \mathcal{G}$ for all $j$ and $R_1 , R_2$ rational expressions  in $(s_1 , \dots , s_\nu )$  in $ \mathbb{R}^{\nu } $. 

\end{theorem}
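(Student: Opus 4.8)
The plan is to recast the statement through apolarity, the standard duality for binary forms, which makes both the rationality of $R_1,R_2$ and the disappearance of the real obstruction transparent. Recall the Apolarity Lemma: a binary form $p$ of degree $d$ admits a decomposition $p=\sum_{k=1}^{d}\lambda_k\ell_k^{d}$ into $d$ pairwise non-proportional powers if and only if there is a squarefree form $q$ of degree $d$, apolar to $p$, whose $d$ distinct linear factors are dual to the forms $\ell_k$. For a form $q$ of degree exactly $d$ the apolarity relation $q(\partial_x,\partial_y)\,p=0$ has degree $0$, so it collapses to the single linear equation $\sum_{k=0}^{d}\binom{d}{k}q_k c_k=0$ on the coefficients of $q$. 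Thus the whole construction reduces to producing one apolar form $q$ of degree $d$ with $d$ distinct real roots and the prescribed symmetric shape.

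First I would impose the shape dictated by the statement. The pair of forms $x+s_jy$ and $x-s_jy$ is dual to the quadratic factor $(s_jx-y)(s_jx+y)=s_j^{2}x^{2}-y^{2}$, and the extra form $R_1x+R_2y$ is dual to the linear factor $R_2x-R_1y$; hence I set $q=Q\cdot(R_2x-R_1y)$ with $Q=\prod_{j=1}^{\nu}(s_j^{2}x^{2}-y^{2})$. Once $s_1,\dots,s_\nu$ are fixed, $Q$ is a fixed real form of degree $2\nu$ and the coefficients of $q$ are linear in $(R_1,R_2)$; substituting into the apolarity equation therefore yields a single linear relation $\alpha R_1+\beta R_2=0$ whose coefficients $\alpha,\beta$ are polynomials in $s_1,\dots,s_\nu$ (with the $c_i$ as fixed data). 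Taking $[R_1:R_2]=[\beta:-\alpha]$ gives real values $R_1,R_2$ that are rational in $(s_1,\dots,s_\nu)$, exactly as claimed, and reality is automatic because the relation has real coefficients. This linearity is precisely why the longer length-$d$ decomposition always stays real, whereas the minimal Sylvester decomposition need not.

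It remains to recover the $\lambda_k$ and to identify $\mathcal G$. When $q$ is squarefree its $d$ distinct roots give $d$ pairwise non-proportional real forms, and solving $p=\sum_k\lambda_k\ell_k^{d}$ is then an invertible generalized Vandermonde system, so the $\lambda_k$ exist, are unique, real, and rational in the data. I would define $\mathcal G\subseteq\mathbb R^{\nu}$ as the set of tuples for which the construction succeeds: the $s_j$ nonzero with pairwise distinct squares (so $Q$ is squarefree with real roots), the pair $(\alpha,\beta)$ nonzero (so $[R_1:R_2]$ is well defined and no denominator in $R_1,R_2$ vanishes), and $[R_1:R_2]$ distinct from every $[1:\pm s_j]$ (so $q$ remains squarefree and the expression is non-redundant). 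Each failure is cut out by a polynomial equation in $(s_1,\dots,s_\nu)$, so $\mathcal G$ is the complement of a real algebraic set, hence Zariski-open, dense, and semialgebraic; it is moreover stable under sign changes of the coordinates, which is what lets the symmetric pairs $\pm s_j$ appear.

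The delicate point, and the step I expect to be the main obstacle, is proving that $\mathcal G$ is genuinely dense (equivalently nonempty) for every $\vec c\neq\vec 0$, that is, that the excluded locus is a proper subset. This amounts to checking that, as polynomials in $s_1,\dots,s_\nu$, the coefficients $\alpha$ and $\beta$ do not both vanish identically and that the incidence conditions $[R_1:R_2]=[1:\pm s_j]$ are not forced, uniformly in $p$. I would settle this by a leading-term analysis of $\alpha,\beta$ as functions of the $s_j$ (or by exhibiting, for each $\vec c$, one explicit admissible tuple), after which a standard dimension argument gives positive codimension for the bad set. The remaining verifications, namely squarefreeness of $q$, non-proportionality of the $\ell_k$, and invertibility of the Vandermonde system, are routine once distinctness of the roots is secured, and the split between the even factor $Q$ and the single odd factor is exactly what makes the odd case close up with $\nu$ symmetric pairs plus one leftover form.
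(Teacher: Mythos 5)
Your route is genuinely different in presentation from the paper's, although it produces the same family of decompositions. The paper never invokes apolarity explicitly: it borders a Vandermonde-type matrix with unknowns $X_0,\dots,X_d$ and the coefficient vector $\vec c$, expands the determinant as $\Delta_0X_0+\cdots+\Delta_dX_d$, observes that the specialization $h^*(T)$ already has the $d-1$ roots $\pm s_j$, and recovers the last node as $R=-\Delta_{d-1}/\Delta_d$ by Vieta (the symmetric pairs cancel in the sum of the roots); solvability of the Vandermonde system $M\vec{\lambda}=\bar c$ is then the vanishing of the bordered determinant. Your apolarity condition $\alpha R_1+\beta R_2=0$ is exactly the relation $\det(M\mid\bar c)=0$, with $\alpha,\beta$ playing the role of $\Delta_{d-1},\Delta_d$, so the underlying linear algebra is identical; what your formulation buys is conceptual clarity about why the last form is rational in $(s_1,\dots,s_\nu)$, and the projective solution $[R_1:R_2]=[\beta:-\alpha]$ absorbs in one formula the two cases the paper must treat separately ($\Delta_d\not\equiv 0$ giving $(R_1,R_2)=(1,R)$, versus $\Delta_d\equiv 0$ giving $(0,1)$ via the auxiliary matrix $M_0$). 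The paper's version is more elementary and maps directly onto the algorithm that follows it.

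The one genuine gap is the step you yourself flag but do not carry out: you never prove that the excluded locus is a proper algebraic subset, i.e.\ that $\mathcal{G}$ is nonempty and hence dense. This is not a routine verification to be deferred, because it is precisely where the hypothesis $\vec c\neq\vec 0$ enters; without it the construction could be vacuous. Two observations close it. First, $(\alpha,\beta)$ cannot vanish identically in $(s_1,\dots,s_\nu)$: up to sign, $\beta$ and $\alpha$ are the apolar pairings of $p$ with $Q\cdot x$ and $Q\cdot y$, and since the coefficients of $Q=\prod_j(s_j^2x^2-y^2)$ are, up to sign, the algebraically independent elementary symmetric functions of the $s_j^2$, the forms $Q\cdot x$ and $Q\cdot y$ span, as $s$ varies, the odd and even parts (in $x$) of the degree-$d$ monomials, hence together all of $\mathcal{B}_d$; so $\alpha\equiv\beta\equiv 0$ would force $p=0$ by nondegeneracy of the apolarity pairing. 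Second, the incidence $[R_1:R_2]=[1:\pm s_i]$ cannot hold identically: $\alpha$ and $\beta$ are invariant under $s_i\mapsto -s_i$, so $[R_1:R_2]\equiv[1:s_i]$ would force $[R_1:R_2]\equiv[1:-s_i]$ as well, whence $s_i\equiv 0$, a contradiction. With these two facts every defining condition of your $\mathcal{G}$ fails only on a proper algebraic set, and density follows; this is also the argument implicitly needed (and only briefly asserted) in the paper's own proof.
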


\begin{proof}
Assume that $d=2\nu+1$.  Take a nonzero real binary form $p(x,y)$ as in \eqref{defp}, and  $\vec{c} = (c_0 ,\dots, c_d )$ the point of $\mathbb{R}^{d+1}\backslash\{\vec{0}\}$ associated to $p(x,y)$.
In fact, running Algorithm \ref{alg-WD-new} over "a symbolic ring", we obtain a one-parameter WD.

We consider two independent sets of  variables over  $\mathbb{R}$, say  $\{X_0 ,\dots ,X_d\}$ and $ \{S_1  ,\dots ,S_\nu\}$,  for  $\nu=(d-1)/2$, and  the $(d+1)\times (d+1)$ matrix

\begin{equation} \label{V_impar}
    V=
\left(\begin{array}{ccccccc}  X_{0}&1&1&\cdots&1&1&c_{d} \\ X_{1}&S_{1}&-S_{1}&\cdots&S_{\nu}&-S_{\nu}&c_{d-1} \\
\vdots&\vdots&\vdots&\cdots&\vdots&\vdots&\vdots \\
X_{d}&S_{1}^{d}&(-1)^{d}S_{1}^{d}&\cdots&S_{\nu}^{d} &(-1)^{d}S_{\nu}^{d}&c_{0}\end{array}\right) \, .
\end{equation} 
Its determinant is a linear polynomial in $X_0 , \dots , X_d$ with coefficients $\Delta_0 , \dots , \Delta_d$ in the ring $\mathbb{R} [S_1,\cdots, S_{\nu}]$, say 
\begin{equation}\label{eq-det-odd}
\det (V)= h(X_0 ,\dots ,X_d )=\Delta_0 X_0 \!+\!\Delta_1 X_1 \!+\!\cdots +\! \Delta_d X_d .    
\end{equation}
First, let us assume that $\Delta_d$ is a nonzero polynomial. Then,
$\Omega = \mathbb{R}^\nu \setminus \{\Delta_d = 0 \}$ is  open and dense in $\mathbb{R}^\nu$ {(see the section {\it Algebraic and Semi-algebraic Sets} in \cite {BCR})} and  the quotient $R =-\Delta_{d-1}/\Delta_d  $ defines a rational function in $\Omega$. In order to find the suitable linear forms to solve $(WD)_d$ for $p$, see \eqref{eq-problem}, we will restrict this open set to assure the $\lambda_i$ in \eqref{desWaring} exist, $1\leq i \leq d$. In fact,  we consider the real algebraic sets in $\mathbb{R}^\nu$
$$
A= \bigcup_{i=1}^{\nu} \{S_i=0\}  ,\quad D= \left(\bigcup_{i<j} \{S_i + S_j=0\}\right) \cup \left(\bigcup_{i<j} \{S_i - S_j=0\}\right),
$$
$$
B= \left(\bigcup_{i=1}^{\nu} \{\Delta_{d-1}+ S_i\Delta_d=0\}\right) \cup \left(\bigcup_{i=1}^{\nu} \{\Delta_{d-1}- S_i\Delta_d=0\}\right)
$$
and then 
\begin{equation}\label{conj-G-odd}
 \mathcal{G} =\Omega\setminus (A\cup B\cup D)  
\end{equation}
is an open  semialgebraic set in $\mathbb{R}^\nu$. Moreover, the open semialgebraic set  $\mathcal{G}$ is non empty since $\Omega$ is dense, and we can choose a point $\textsc{s}=(s_1, \cdots, s_{\nu}) \in \mathcal{G}$. Then, the real polynomial
$$
h^* (T)=h(1,T,T^2 ,\dots ,T^d )=\Delta_0 (\textsc{s}) +\Delta_1 (\textsc{s})T +\cdots +\Delta_d(\textsc{s}) T^d  \
$$
has $d=2\nu+1$ real roots: $\pm s_i \in \mathbb{R}\setminus{\{0\}}$ and also $\R=R(\textsc{s} )$, which are distinct  because $\textsc{s}\in\mathcal{G}$.
\para 
Now, we consider the matrix
\begin{equation} \label{eq:M_impar}
M=
\left(
\begin{array}{cccccc}  1&1&\cdots&1&1 &1\\ s_{1}&-s_{1}&\cdots&s_{\nu}&-s_{\nu} &R\\
\vdots&\vdots&\cdots&\vdots&\vdots &\vdots\\
s_{1}^{d}&(-1)^{d}s_{1}^{d}&\cdots&s_{\nu}^{d}&(-1)^{d}s_{\nu}^{d} &R^{d}\end{array}
\right) \ ,
\end{equation} 
and   the linear system:
\begin{equation}\label{eq:sistema1}
    M\vec{\lambda} =\bar{c} 
\end{equation}
where ${\bar{c}} =(c_d ,\dots , c_0)^{\,\,t}$ and  ${\vec{\lambda}} =(\lambda_1\, ,\dots ,\,\lambda_d)^{\,t}$. 
We point out that $M$ is a   $(d+1) \times \,d$ matrix of  rank  $d$, by the election of \textsc{s}. But also the determinant  $\det (\,M|\,\bar{c}\,)$  equals zero,  since we have $h^{*}(1,\textrm{R},\textrm{R}^2,\,\cdots,\,\textrm{R}^d)=0$.  Thus, the system \eqref{eq:sistema1} can be solved to obtain $\vec{\lambda}^* =( \lambda_1^* , \dots , \lambda_d^* )^t$.  Then, we verify the statement in this case with   $ \lambda_j = \lambda_{j}^*$, for $j=1, \dots , d$, and $(R_1 , R_2 )= (1, \R)$. Hence,

\begin{equation}\label{eq:WD_impar}
p(x,y) = \sum_{j=1}^{d} \lambda_j^* \,L_{j}^{d}\, (x,y) ,
\end{equation}

\noindent with $L_j (x,y)=x+s_{\frac{j+1}{2}}\, y$, if $j$ is odd,  $L_j (x,y)=x-s_\frac{j}{2} \,y$, if $j$ is even, when $j<d$, and $L_d (x,y)=x+\R y$. In this way the theorem is proved in this case.

\medskip

For the remaining cases, that is for those $p$ such that $\Delta_d \equiv 0$ for all $\textsc{s}\in \mathcal{G}$, we proceed as follows.
\para 

Choose a point $\textsc{s}\in \mathcal{G}$,  and consider the matrix 
\begin{equation}\label{eq:M_impar_0}
   M_0=
\left(
\begin{array}{cccccc}  1&1&\cdots&1&1 &0\\ s_{1}&-s_{1}&\cdots&s_{\nu}&-s_{\nu} &0\\
\vdots&\vdots&\cdots&\vdots&\vdots &\vdots\\
s_{1}^{d}&(-1)^{d}s_{1}^{d}&\cdots&s_{\nu}^{d}&(-1)^{d}s_{\nu}^{d} &1\end{array}
\right)  \ .
\end{equation}
Now the corresponding linear system is 
\begin{equation}\label{eq:d-impar-0}
M_0\vec{\lambda} =\bar{c},
\end{equation}
with ${\bar{c}} $ and  ${\vec{\lambda}}$ as above. Again,  $M_0$ is a   $(d+1) \times \,d$ matrix of  rank  $d$ by construction. Moreover the determinant  $\det (\,M|\,\bar{c}\,)$  equals zero, because we have $\Delta_d \equiv 0$.  Thus, the system \eqref{eq:d-impar-0} can be solved to obtain $\vec{\lambda}^* =( \lambda_1^* , \dots , \lambda_d^* )^t$.  Then, we can write $p$ as in \eqref{eq:WD_teo_impar}, taking  $ \lambda_j = \lambda_{j}^*,\,$  for $j=1, \dots , d$, and $(R_1 , R_2 )= (0,1)$, as we wanted to prove.
\end{proof}

Next, we give an example of the previous procedure.

\begin{ex}  Take $p(x,y)= 5226y^5+4970xy^4+1860x^2y^3+340x^3y^2+30x^4y+x^5$. \label{exp-p5} Choosing $\textsc{s}=(1,\,2)$, we obtain $R=\frac{120}{23}$ and we have a WD  with length $5$,  but if $\textsc{s}=(3,\,4)$ then $R=5$ and we have found a WD  with length $2$, value that, in this case, gives us the real rank of $p$. Hence, we can write
$$
\begin{array}{rl}
{\scriptstyle p(x,y)\,}=&\!\!\! \frac{70}{97}{\scriptstyle(x+y)^5} - \frac{28}{143}{\scriptstyle(x-y)^5} - \frac{35}{37}{\scriptstyle(x+2y)^5} + \frac{5}{83}{\scriptstyle(x-2y)^5} + \frac{57927087}{42597841} {\scriptstyle\left(x+\frac{120}{23}\right)^5}\\
\, \\
  =&\!\!\!-(x+4y)^5+2(x+5y)^5.
\end{array}
$$

\end{ex}

\begin{obs}\label{raro-d3}
  Observe that for $p(x,y)=3x^2 y + y^3$, we have $\Delta_3=\left|\begin{array}{ccc}
       1&1&0  \\
       s_1&-s_1&1 \\
       s_1^2&s_1^2&0 
  \end{array}\right|=0$, for any choice of $s_1$. So, we construct $M_0$ as in \eqref{eq:M_impar_0} and the system \eqref{eq:d-impar-0} gives
  $$
  \lambda_1 = \frac{1}{2 s_1} ,\   \lambda_2 = -\frac{1}{2 s_1} ,\  \lambda_3 = 1-s_1^2.
  $$
 for every $s_1 \not =0$. Then, 
 $$
 3x^2 y + y^3 = \frac{1}{2 s_1 } \left( x+s_1 y \right)^3 -
 \frac{1}{2 s_1} \left( x-s_1 y \right)^3 +( 1-s_1^2) y^3
 $$
 \noindent and we get a one-parameter WD.
 
 \smallskip
 
Note that if $s_1=\pm 1$ we obtain a shorter expression that shows us that $p$ is a real binary form of real rank $2$.
\end{obs}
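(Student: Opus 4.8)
The plan is to treat this remark as a direct verification that the cubic form $p(x,y)=3x^2y+y^3$ lands in the degenerate branch of Theorem \ref{thm-odd} (the case $\Delta_d\equiv 0$) and to exhibit the resulting one-parameter family explicitly. First I would read off the coefficient vector of $p$ in the convention of \eqref{defp}: since $\sum_{i=0}^{3}\binom{3}{i}c_i x^i y^{3-i}=c_3x^3+3c_2x^2y+3c_1xy^2+c_0y^3$, matching against $3x^2y+y^3$ forces $\vec{c}=(c_0,c_1,c_2,c_3)=(1,0,1,0)$.

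Next I would compute $\Delta_3$, the coefficient of $X_3$ in $\det(V)$ for $d=3$, $\nu=1$. By cofactor expansion of \eqref{V_impar} along the first column, $\Delta_3$ is, up to sign, the $3\times 3$ minor obtained by deleting the last row and the first column; substituting $c_3=0$, $c_2=1$, $c_1=0$ produces exactly the displayed matrix $\left(\begin{smallmatrix}1&1&0\\ s_1&-s_1&1\\ s_1^2&s_1^2&0\end{smallmatrix}\right)$. Expanding this along its last column, whose only nonzero entry is the middle $1$, reduces the computation to the $2\times 2$ minor $\det\left(\begin{smallmatrix}1&1\\ s_1^2&s_1^2\end{smallmatrix}\right)$, which vanishes identically because its two columns coincide. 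Hence $\Delta_3=0$ for every value of $s_1$, establishing the first assertion and placing $p$ in the degenerate case.

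Being in that branch, I would follow the recipe of Theorem \ref{thm-odd}: choose any $s_1\neq 0$ (so $\textsc{s}=(s_1)\in\mathcal{G}$), form the matrix $M_0$ of \eqref{eq:M_impar_0}, and solve the overdetermined system \eqref{eq:d-impar-0}, namely $M_0\vec{\lambda}=\bar{c}$ with $\bar{c}=(0,1,0,1)^t$. The symmetric/antisymmetric structure of the columns decouples the equations: the first row gives $\lambda_2=-\lambda_1$, the second gives $2s_1\lambda_1=1$, whence $\lambda_1=\tfrac{1}{2s_1}$ and $\lambda_2=-\tfrac{1}{2s_1}$, and the last row gives $\lambda_3=1-s_1^2$. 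The remaining (third) equation $s_1^2(\lambda_1+\lambda_2)=0$ is satisfied automatically; this is precisely the consistency forced by $\det(M_0\mid\bar{c})=0$, and checking that the redundant row causes no obstruction is the only point that needs a moment's care. I would then confirm the identity directly: expanding $(x\pm s_1y)^3$ shows that the antisymmetric combination $\tfrac{1}{2s_1}\big[(x+s_1y)^3-(x-s_1y)^3\big]$ equals $3x^2y+s_1^2y^3$, so adding $(1-s_1^2)y^3$ recovers $p$, valid for every $s_1\neq 0$.

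Finally, for the rank statement I would observe that $\lambda_3=1-s_1^2$ vanishes exactly when $s_1=\pm 1$, collapsing the family to the two-term expression $3x^2y+y^3=\tfrac{1}{2}(x+y)^3-\tfrac{1}{2}(x-y)^3$; since $p$ is not a scalar multiple of a single cube, its real rank cannot be $1$, so this exhibits the real rank as exactly $2$. No serious obstacle arises: the entire remark is a specialization of the already-proved degenerate branch, and the only substantive checks are the vanishing of that $2\times 2$ minor and the automatic consistency of the one redundant equation.
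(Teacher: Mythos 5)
Your proposal is correct and follows essentially the same route as the paper: you verify $\Delta_3\equiv 0$, pass to the degenerate branch of Theorem \ref{thm-odd} with $M_0$ as in \eqref{eq:M_impar_0}, solve \eqref{eq:d-impar-0} to obtain $\lambda_1=\tfrac{1}{2s_1}$, $\lambda_2=-\tfrac{1}{2s_1}$, $\lambda_3=1-s_1^2$, and specialize $s_1=\pm 1$ for the rank statement. Your extra checks (the automatic consistency of the redundant third equation and the observation that $p$ is not a pure cube, so the real rank is exactly $2$) merely make explicit what the remark leaves implicit.
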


\para 

\noindent \textbf{\textsc{Construction for  even degrees.}}
\para{}
Now, we analyze the WD of length at most $d$ for a given real binary form $p$ of even degree $d$. We give a parametric presentation  where the parameter space is an open semialgebraic set $\mathcal{G}$ in $\mathbb{R}^{d/2}$. We can establish the following result.

\begin{theorem}\label{thm-even}
Let $p(x,y)= \sum_{i=0}^{d}  \binom{d}{i}    c_{i}\, x^{i}\, y^{d-i}$ be a real binary form  of even degree $d=2\nu$. Then, there exists an open and dense semialgebraic set $ \mathcal{G}$ in $\mathbb{R}\times\mathbb{R}^{\nu -1 }$ such that $p$ has the following WD:
\begin{equation}\label{eq:WD_teo_par}
p(x,y) = \sum_{j=1}^{\nu-1} \lambda_{2j-1} (x+s_j\, y)^{d}  +   \sum_{j=1}^{\nu-1} \lambda_{2j} (x-s_j\, y)^{d}  +  \lambda_{d-1} ( x +s y )^d    + \lambda_d (R_1 x +R_2 y )^d   
\end{equation}
with  $s, s_j , -s_j $ in $  \mathcal{G}$ for all $j$ and $R_1 , R_2$ rational expressions of $(s,s_1 , \dots , s_{\nu-1})$ in $  \mathbb{R} \times \mathbb{R}^{\nu -1} $ .

\end{theorem}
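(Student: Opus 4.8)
The plan is to mimic the odd-degree proof, adapting the combinatorial structure to accommodate the extra symmetric node $s$ that is forced by parity. First I would introduce two independent sets of variables $\{X_0,\dots,X_d\}$ and $\{S,S_1,\dots,S_{\nu-1}\}$ over $\mathbb{R}$ and build a $(d+1)\times(d+1)$ matrix $V$ whose first column is $(X_0,\dots,X_d)^t$, whose last column is $(c_d,\dots,c_0)^t$, and whose middle columns are the Vandermonde-type columns $(1,S_j,\dots,S_j^d)^t$ and $(1,-S_j,\dots,(-1)^dS_j^d)^t$ for $j=1,\dots,\nu-1$, together with one extra column $(1,S,\dots,S^d)^t$ for the unpaired node. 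Expanding $\det(V)$ along the first column gives a linear form $h(X_0,\dots,X_d)=\Delta_0 X_0+\cdots+\Delta_d X_d$ with coefficients $\Delta_i\in\mathbb{R}[S,S_1,\dots,S_{\nu-1}]$, exactly as in \eqref{eq-det-odd}.

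Next I would split into the two cases $\Delta_d\not\equiv 0$ and $\Delta_d\equiv 0$, paralleling the odd case. In the generic case I set $R=-\Delta_{d-1}/\Delta_d$, let $\Omega=\mathbb{R}^\nu\setminus\{\Delta_d=0\}$, and then carve out a semialgebraic set $\mathcal{G}=\Omega\setminus(A\cup B\cup D)$ by removing the loci where any node vanishes, where two nodes coincide (now the conditions must also separate $s$ from $\pm s_j$ and from $R$), and where the resulting $\lambda_i$ would fail to exist. The key point is that for $\textsc{s}=(s,s_1,\dots,s_{\nu-1})\in\mathcal{G}$ the univariate specialization $h^*(T)=h(1,T,\dots,T^d)$ is a degree-$d$ polynomial whose $d=2\nu$ roots $\pm s_j$, $s$, and $R$ are real and pairwise distinct. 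I then form the $(d+1)\times d$ matrix $M$ whose columns are the node vectors $(1,s_j,\dots,s_j^d)^t$, $(1,-s_j,\dots)^t$, $(1,s,\dots,s^d)^t$, and $(1,R,\dots,R^d)^t$, solve $M\vec\lambda=\bar c$, and note that the system is consistent because $\det(M\mid\bar c)=h^*$ evaluated at the relevant data vanishes by construction, while $\operatorname{rank}M=d$ by the distinctness of the nodes (Vandermonde). This yields \eqref{eq:WD_teo_par} with $(R_1,R_2)=(1,R)$. In the degenerate case $\Delta_d\equiv 0$ I replace the last node column by $(0,\dots,0,1)^t$, obtaining $M_0$ of rank $d$ with $\det(M_0\mid\bar c)=0$, giving the decomposition with $(R_1,R_2)=(0,1)$.

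The main obstacle I anticipate is verifying that $\mathcal{G}$ is nonempty and dense, and in particular that the extra symmetric node $s$ does not create degeneracies that collapse the rank of $M$ or destroy consistency. Concretely, one must check that the genericity set excluding all coincidences among $\{s,\pm s_1,\dots,\pm s_{\nu-1},R\}$ is still open and dense: this follows because each excluded locus is a proper algebraic subset (a nontrivial polynomial condition in the $S$-variables), so its complement in the dense open set $\Omega$ remains dense by the standard fact that a finite union of proper algebraic sets has empty interior (the reference to \emph{Algebraic and Semi-algebraic Sets} in \cite{BCR} covers this). A subtler point is ensuring that the relevant $\Delta$-coefficients defining $R$ and the $B$-type conditions are genuinely nontrivial polynomials in the even case — one should confirm that the added column indexed by $S$ keeps $\det V$ a linear form in the $X_i$ with at least one coefficient not identically zero, which is guaranteed since the remaining columns of $V$ form a generically nonsingular Vandermonde block. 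Once these genericity checks are in place, the linear-algebra argument is identical in form to the odd case, and the parity-driven bookkeeping of indices in \eqref{eq:WD_teo_par} is purely routine.
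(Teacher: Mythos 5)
Your overall strategy is exactly the paper's: the same bordered matrix $V$ as in \eqref{eq:V_par}, the same expansion $\det(V)=\Delta_0X_0+\cdots+\Delta_dX_d$, the same case split on $\Delta_d\equiv 0$, the same genericity set $\mathcal{G}=\Omega\setminus(A\cup B\cup D)$, and the same rank/consistency argument for the Vandermonde-type system $M\vec\lambda=\bar c$. There is, however, one concrete step that fails as written: your definition $R=-\Delta_{d-1}/\Delta_d$. That formula is correct only in the odd case, where the known roots of $h^*(T)=\Delta_0+\Delta_1T+\cdots+\Delta_dT^d$ occur in pairs $\pm s_j$ summing to zero, so Vieta hands you the last root as $-\Delta_{d-1}/\Delta_d$. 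In the even case the node set contains the unpaired value $s$, so the known roots $s,\pm s_1,\dots,\pm s_{\nu-1}$ sum to $s$, not $0$, and the remaining root is $R=-\Delta_{d-1}/\Delta_d-s$, which is what the paper uses. This is not cosmetic: your consistency claim rests precisely on $\det(M\mid\bar c)=\pm h^*(R)=0$, and with your $R$ one has $h^*(R)\neq 0$ for generic choices (since $s\neq 0$ on $\mathcal{G}$), so the overdetermined system $M\vec\lambda=\bar c$ generically has no solution. The same correction must propagate into the set $B$: the loci to excise are $\{\Delta_{d-1}+2S\Delta_d=0\}$ (i.e.\ $R=s$) and $\{\Delta_{d-1}+(S\pm S_i)\Delta_d=0\}$ (i.e.\ $R=\mp s_i$), as in \eqref{conj-G-even}, rather than the odd-case conditions $\Delta_{d-1}\pm S_i\Delta_d=0$.

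Once $R$ is fixed, the rest of your argument coincides with the paper's proof: the rank-$d$ claim for $M$ from pairwise distinctness of the nodes, the degenerate case $\Delta_d\equiv 0$ handled by the replacement column $(0,\dots,0,1)^t$ with $(R_1,R_2)=(0,1)$, and the bookkeeping yielding \eqref{eq:WD_teo_par}. Your worry about whether the $B$-type polynomials are genuinely nontrivial (so that $\mathcal{G}$ stays dense) is legitimate, but the paper does not verify it either; it only argues that $\mathcal{G}$ is open and nonempty before choosing $\textsc{s}$.
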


\begin{proof}
Let  $p(x,y)$ be a nonzero real binary form  as in \eqref{defp} of degree $d=2\nu$ and  $\vec{c} = (c_0 ,\dots, c_d )$ the vector of $\mathbb{R}^{d+1}\backslash\{\vec{0}\}$ associated to $p(x,y)$.

\para

Now we consider two independent sets of variables over  $\mathbb{R}$, say  $\{X_0 ,\dots ,X_d\}$ and $\{S, S_1  ,\dots ,S_{\nu -1}\}$. In this case we construct  the $(d+1)\times (d+1)$ matrix:

\begin{equation}\label{eq:V_par}
V=
\left(\begin{array}{cccccccc}  X_{0}&1&1&1&\cdots&1&1&c_{d} \\ X_{1}&S&S_{1}&-S_{1}&\cdots&S_{\nu-1}&-S_{\nu-1}&c_{d-1} \\
\vdots&\vdots&\vdots&\vdots&\cdots&\vdots&\vdots&\vdots \\
X_{d}&S^d &S_{1}^{d}&(-1)^{d}S_{1}^{d}&\cdots&S_{\nu-1}^{d} &(-1)^{d}S_{\nu-1}^{d}&c_{0}\end{array}\right) \ .
\end{equation} 
Its determinant is a linear polynomial in $X_0 , \dots , X_d$ with coefficients $\Delta_0 , \dots , \Delta_d$ in the ring $\mathbb{R} [S,S_1,\cdots, S_{\nu-1}]$, say 
\begin{equation}\label{eq-det-even}
\det (V)= h(X_0 ,\dots ,X_d )=\Delta_0 X_0 \!+\!\Delta_1 X_1 \!+\!\cdots +\! \Delta_d X_d .    
\end{equation}
Now, let us consider the polynomial $\Delta_{d}$. First assume  $\Delta_{d}\not\equiv 0$.  Thus,  $\Omega=\mathbb{R}^{\nu}\setminus \{ \Delta_d =0\, \}$ is an open and dense semialgebraic set.  {We also} consider the real algebraic sets in $\mathbb{R}^\nu$:
$$
A= \bigcup_{i=1}^{\nu-1} \{S_i=0\},\quad 
B= \left\{
\, \Delta_{d-1}+2 S\Delta_d=0  \ \right\} 
\cup 
\left(\bigcup_{i=1}^{\nu-1} \{\Delta_{d-1}+ (S \pm S_i ) \Delta_d=0\}\right),
$$

$$
 D=\left(\bigcup_{i<j} \{S_i \pm S_j=0\}\right) \cup 
\left( \bigcup_{i=1}^{\nu-1} \{S \pm S_i=0\} \right) .
$$
Then 
\begin{equation}\label{conj-G-even}
\mathcal{G} =\Omega\setminus (A\cup B\cup D)
\end{equation}
is an open  semialgebraic set in $\mathbb{R}^\nu$. Moreover, since $\mathcal{G}$ is non empty,  we can choose $\textsc{s}=(s^* ,s_1, \cdots, s_{\nu-1}) \in \mathcal{G}$. Thus the real polynomial
$$
h^* (T)=h(1,T,T^2 ,\dots ,T^d )=\Delta_0 (\textsc{s}) +\Delta_1 (\textsc{s})T +\cdots +\Delta_d(\textsc{s}) T^d  \
$$
has $d=2\nu$ real roots: $\pm s_i \in \mathbb{R}\setminus{0}$, and also  $s^*$ and  $R=-\dfrac{\Delta_{d-1}(\textsc{s})}{\Delta_d (\textsc{s})}-s^*$, which are distinct  because $\textsc{s}\in \mathcal{G}$.

In this case,  we consider the $(d+1)\times d$ matrix
\begin{equation}\label{eq:M_par}
M=
\left(\begin{array}{cccccccc} 1&1&1&\cdots&1&1&1\\ s&s_{1}&-s_{1}&\cdots&s_{\nu-1}&-s_{\nu-1} &R\\
\vdots&\vdots&\vdots&\cdots&\vdots&\vdots &\vdots\\
s^d&s_{1}^{d}&(-1)^{d}s_{1}^{d}&\cdots&s_{\nu-1}^{d}& (-1)^{d}s_{\nu-1}^{d}&R^d\end{array}\right)
\end{equation} 
and the corresponding linear system
\begin{equation}\label{eq:sistema2}
    M\vec{\lambda} =\bar{c}
\end{equation}
with ${\bar{c}} =(c_d ,\dots , c_0)^{\,\,t}$ and  ${\vec{\lambda}} =(\lambda_1\, ,\dots ,\,\lambda_d)^{\,t}$. 
Just like in the odd case, $M$ is a   $(d+1) \times \,d$ matrix of  rank  $d$ and also the determinant  $\det (\,M|\,\bar{c}\,)$  equals zero.  Thus, the system \eqref{eq:sistema2} can be solved to obtain $\vec{\lambda}^* =( \lambda_1^* , \dots , \lambda_d^* )^t$. Hence,

\begin{equation}\label{eq:WD_par}
p(x,y) = \sum_{j=1}^{d-2} \lambda_{j+1}^* \,L_{j}^{d}\, (x,y) + \lambda_1^*   L_{d-1}^d (x,y) +\lambda_d^*  L_d^d (x,y) \ ,
\end{equation}
\noindent
where, for $1\leq j<d-1$, we write  $L_j (x,y)=x+s_{\frac{j+1}{2}}\, y$, if $j$ is odd,  $L_j (x,y)=x-s_{\frac{j}{2}}\, y$, if $j$ is even,  $L_{d-1} (x,y)=x+s^* y\,$  and  $\,L_d (x,y)=x+\R y$. Then, defining  $ s=s^*,\ \lambda_d = \lambda_d^*$, $ \lambda_{d-1} = \lambda_1^*$, {and} $ \lambda_j = \lambda_{j+1}^*$, for $j=1, \dots , d-2$,  {and} $(R_1 , R_2 )= (1, \R)$, we have proved the statement in this case.

\medskip
Next, let    $\ \Delta_d $ be the zero polynomial. Now choose a point $\textsc{s}\in \mathcal{G}$,  and consider the $(d+1)\times d$ matrix 
\begin{equation}\label{eq:M_par_0}
M_0=
\left(\begin{array}{cccccccc} 
1&1&1&\cdots&1&1&0
\\ s&s_{1}&-s_{1}&\cdots&s_{\nu-1}&-s_{\nu-1} &0\\
\vdots&\vdots&\vdots&\cdots&\vdots&\vdots &\vdots\\
s^d&s_{1}^{d}&(-1)^{d}s_{1}^{d}&\cdots&s_{\nu-1}^{d}& (-1)^{d}s_{\nu-1}^{d}&1\end{array}\right)    \ .
\end{equation}
The corresponding linear system is
\begin{equation}\label{eq:sistema-par-0}
    M_0\vec{\lambda} =\bar{c} 
\end{equation}
with ${\bar{c}}\,$ and  $\,{\vec{\lambda}} \,$ as before. 
Again, $M_0$ is a   $(d+1) \times \,d$ matrix of  rank  $d$, by construction, and  also the determinant  $\det (\,M|\,\bar{c}\,)$  equals zero.  Thus, the system \eqref{eq:sistema-par-0} can be solved to obtain $\vec{\lambda}^* =( \lambda_1^* , \dots , \lambda_d^* )^t$. Hence,
\begin{equation}\label{eq:WD_par_0}
p(x,y) = \sum_{j=1}^{d-2} \lambda_{j+1}^* \,L_{j}^{d}\, (x,y) + \lambda_1^*  L_{d-1}^d (x,y) +\lambda_d^* \, L_d^d \ ,
\end{equation}
\noindent
where, for $1\leq j<d-1$, we write  $L_j (x,y)=x+s_{\frac{j+1}{2}}\, y$, if $j$ is odd,  $L_j (x,y)=x-s_{\frac{j}{2}}\, y$, if $j$ is even, and also  $L_{d-1} (x,y)=x+s^* y$,  $L_d (x,y)=y$. Then, defining  $s=s^*$, $ \lambda_d = \lambda_d^*$, $ \lambda_{d-1} = \lambda_1^*$ and $ \lambda_j = \lambda_{j+1}^*$ for $j=1, \dots , d-2$, and $(R_1 , R_2 )= (0,1)$,  {we obtain  the proof in this case}.
\end{proof}

\medskip

Next, we compute an example using the previous procedure.

\begin{ex}\label{ex-4-2}  Take $p(x,y)= 240y^4+224xy^3+72x^2y^2+8x^3y+x^4$. \label{exp-p4}  We can choose $\textsc{s} =(0,1)$ and then $R=\frac{38}{9}$. But, for  $\textsc{s} =(0,2)$ we obtain $R=4$. Hence, we get  two decompositions of length $4$ and $2$ (the one that gives us the real rank), respectively:
$$
\begin{array}{rl}
p(x,y)&=\displaystyle \frac{34}{19}x^4-\frac{40}{29}(x+y)^4-\frac{8}{47}(x-y)^4 +\frac{19683}{25897}\left(x+\frac{38}{9}y\right)^4=\\
\, \\
  &=-(x+2y)^4+(x+4y)^4.
\end{array}
$$
\end{ex}

\medskip

\begin{obs} Consider the following analytic path 
\[
\gamma : \ [ 0 , 1] \rightarrow \cB_d \,, \ 
\gamma (\varepsilon ) = (\varepsilon^2 +1) y^4+6 \, \varepsilon^2 x^2 y^2   + 4\varepsilon x^3 y \ .
\]
It joints the binary forms $\ \gamma(0)= y^4\,$ and $\ \gamma (1)= 2y^4+6 \,  x^2 y^2   + 4 x^3 y$. Next we apply our construction to the form $\gamma (\varepsilon ) $. Observe that  the system associated with the matrix \eqref{eq:M_par},  for  $s_1 =1$ {and any value of $s^*$}, gives $\,R=-1/\varepsilon\,$ and
\[
  \lambda_1 =0 , \quad \lambda_2 = \dfrac{\varepsilon(\varepsilon^2 + \varepsilon + 1)}{2(\varepsilon + 1)}, \quad \lambda_3 =\dfrac{\varepsilon(\varepsilon^2 - \varepsilon + 1)}{2(\varepsilon - 1)} \ ,\quad  \lambda_4=-\dfrac{\varepsilon^4 }{\varepsilon^2 - 1}
\]
and then, for $\varepsilon\in [0,1)$, we have 
 $$
\gamma (\varepsilon )= \lambda_2  \left( x+ y \right)^4 + \lambda_3 \left( x- y \right)^4 
 -\dfrac{1 }{\varepsilon^2 - 1}  \left( \varepsilon x- y \right)^4 
 $$
Therefore, forms with WD of length $1$ can be in the closure of the set of forms with WD of length $3$. 
\end{obs}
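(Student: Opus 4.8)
The plan is to prove the claim by producing an explicit analytic arc inside $\cB_d$ (with $d=4$) along which every form has real rank three and whose endpoint has real rank one; the limit of such an arc then exhibits a rank-one form in the closure of the rank-three locus. The most economical way to build the arc is to run the even-degree construction of Theorem \ref{thm-even} symbolically in a parameter $\varepsilon$. Accordingly I would fix the path $\gamma$ above, express it in the basis of \eqref{defp}—its coefficient vector is $\vec c=(\varepsilon^2+1,0,\varepsilon^2,\varepsilon,0)$, so that $\bar c=(0,\varepsilon,\varepsilon^2,0,\varepsilon^2+1)^t$—and feed it into the construction with $d=4$, $\nu=2$, $s_1=1$, keeping $s^*$ free.

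The first block of work is solving $M\vec\lambda=\bar c$ for the matrix \eqref{eq:M_par}. With $s_1=1$ the four unknowns $\lambda_1,\dots,\lambda_4$ are the coefficients of $(x+s^*y)^4,(x+y)^4,(x-y)^4,(x+Ry)^4$. For generic $s^*$ the parameter point lies in $\mathcal{G}$, so $M$ has rank four and the augmented system is consistent, whence $\vec\lambda$ is unique. I would then show that the solution has $\lambda_1=0$ by checking that the first column is superfluous: dropping it leaves an over-determined $5\times3$ system in $\lambda_2,\lambda_3,\lambda_4$ whose even rows give $\lambda_4(R^2-1)=\varepsilon^2$ and $\lambda_4(R^4-1)=\varepsilon^2+1$, forcing $R^2=1/\varepsilon^2$, while the odd rows fix the sign $R=-1/\varepsilon$ and render the system consistent; the resulting $\lambda_2,\lambda_3,\lambda_4$ are the displayed ones. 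Rewriting $\lambda_4(x+Ry)^4=-\tfrac{1}{\varepsilon^2-1}(\varepsilon x-y)^4$ then produces the three-term expression in the statement, with the $(x+s^*y)^4$ summand absent for every $s^*$.

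Next I would certify that this is a genuine WD of length three in the sense of \eqref{desWaring} and, more importantly, that the real rank equals three. Non-redundancy on $(0,1)$ is immediate: the forms $x+y$, $x-y$, $\varepsilon x-y$ are pairwise non-proportional since $\varepsilon\notin\{-1,0,1\}$, and a sign check gives $\lambda_2>0$, $\lambda_3<0$ and $-\tfrac{1}{\varepsilon^2-1}>0$, so no summand disappears; this bounds the rank above by three. For the matching lower bound I would compute the middle catalecticant (Hankel) matrix of $\gamma(\varepsilon)$ and evaluate its determinant, which equals $-\varepsilon^2(\varepsilon^4+\varepsilon^2+1)$ and is nonzero on $(0,1)$; a nonsingular middle catalecticant forces the complex, hence real, rank to be at least three. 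Thus $\gamma(\varepsilon)$ has real rank exactly three for every $\varepsilon\in(0,1)$.

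Finally I would let $\varepsilon\to 0^+$: the coefficient vector of $\gamma(\varepsilon)$ tends to that of $y^4$, so $\gamma(\varepsilon)\to y^4$ in $\cB_d$, and since $y^4=(0\cdot x+y)^4$ has real rank one, the claim follows. I expect the obstacle to sit in two places. The serious one is the rank lower bound: without a tool such as the catalecticant one cannot easily exclude that $\gamma(\varepsilon)$ drops to real rank two, and this is where the genuine content lies. The second, more delicate but softer, point is the behaviour of the decomposition at the endpoint, where the auxiliary root $R=-1/\varepsilon$ escapes to infinity and the length-three expression cannot be continued to $\varepsilon=0$ within the construction; the clean remedy is to pass to the rescaled form $\varepsilon x-y\to -y$ and observe that $\lambda_2,\lambda_3\to 0$ while $-\tfrac{1}{\varepsilon^2-1}\to 1$, so that the three summands degenerate continuously to $0+0+y^4$. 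This confirms that the collapse from rank three to rank one is honest and is precisely the real-geometric phenomenon the remark illustrates.
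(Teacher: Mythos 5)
Your computation coincides with the paper's own: you solve the same system \eqref{eq:M_par} with $s_1=1$, obtain the same $R=-1/\varepsilon$ and the same $\lambda_2,\lambda_3,\lambda_4$ with $\lambda_1=0$, rewrite $\lambda_4(x+Ry)^4$ as $-\tfrac{1}{\varepsilon^2-1}(\varepsilon x-y)^4$, and pass to the same limit $\gamma(\varepsilon)\to y^4$, so the proposal is correct and follows essentially the paper's route. The catalecticant argument you append (determinant $-\varepsilon^2(\varepsilon^4+\varepsilon^2+1)\neq 0$ on $(0,1)$, hence real rank exactly $3$) is sound but strengthens the remark beyond what it asserts, since the paper only claims membership in the closure of the set of forms admitting a WD of length $3$, for which your non-redundancy and sign checks already suffice.
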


\begin{obs}\label{remark-Gp}
Observe that real Waring decompositions for binary forms provide a morphism of real algebraic sets:
\begin{align*}
 \xi: &\hskip1.5cm  \mathbb{P}^d &\longrightarrow\qquad &\hskip2cm  \mathbb{P}^d =\mathbb{P}(\cB_d   )\\
& \underbrace{ \left[\,a_1,\,b_1, \cdots , a_\nu,\,b_\nu\,R_1,R_2\right]}_{2\nu+2\,=\,d+1} &\longrightarrow \qquad
& \sum_{i=1}^{\nu}
(a_i x \pm b_i y)^d+  (R_1\,x+R_2\,y)^d \ .
\end{align*}

\noindent We point out that for $d$ an odd number, Theorem \ref{thm-odd} guarantees that $\xi$ is a surjection. 

The analogous mapping for even degrees is no longer surjective, but nevertheless Theorem \ref{thm-even} gives the description of a family of WD for $p$. It is an interesting question to know if there is any description of the fiber for the case of even degree that takes into account the possible signs that affect the needed linear forms   to build a WD. Similar problem is treated in the work of B. Reznick \cite{Re}.

\end{obs}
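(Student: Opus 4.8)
The plan is to derive the surjectivity of $\xi$ directly from Theorem \ref{thm-odd} together with the arithmetic of odd powers, rather than by any Jacobian or degree computation. Fix an arbitrary class $[p]\in\mathbb{P}(\cB_d)$ and choose a representative $p=p_{\vec c}$ of the shape \eqref{defp}. First I would feed $p$ to Theorem \ref{thm-odd}, which produces the concrete decomposition \eqref{eq:WD_teo_impar}, namely $p=\sum_{j=1}^{\nu}\lambda_{2j-1}(x+s_j y)^d+\sum_{j=1}^{\nu}\lambda_{2j}(x-s_j y)^d+\lambda_d(R_1x+R_2y)^d$, valid for any parameter point $\textsc{s}=(s_1,\dots,s_\nu)\in\mathcal{G}$. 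This already presents $p$ as a combination of $d$-th powers of linear forms organised exactly in the pattern that defines $\xi$ (the $\nu$ forms with $+s_j$, the $\nu$ forms with $-s_j$, and the extra $(R_1x+R_2y)$), the only discrepancy being the leftover scalars $\lambda_i$.

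The heart of the argument is to absorb these scalars into the linear forms, and this is precisely where the parity of $d$ enters. Since $d=2\nu+1$ is odd, the map $t\mapsto t^d$ is a bijection of $\mathbb{R}$ onto itself, so every real number $\lambda_i$ — regardless of sign — has a unique real $d$-th root $\mu_i:=\lambda_i^{1/d}$ and satisfies $\lambda_i\,\ell^d=(\mu_i\,\ell)^d$ for any linear form $\ell$. Applying this summand by summand rewrites the decomposition as
\[
p=\sum_{j=1}^{\nu}\bigl(\mu_{2j-1}(x+s_j y)\bigr)^{d}+\sum_{j=1}^{\nu}\bigl(\mu_{2j}(x-s_j y)\bigr)^{d}+\bigl(\mu_d(R_1x+R_2y)\bigr)^{d},
\]
a genuine sum of $d$-th powers of real linear forms with no surviving coefficients. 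It is from this coefficient-free expression that a preimage under $\xi$ can be read off.

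It then remains to record the homogeneous coordinates associated with these absorbed forms and to check that they define a bona fide point of $\mathbb{P}^d$. I would collect the data of the $\nu$ forms $\mu_{2j-1}(x+s_jy)$ and $\mu_{2j}(x-s_jy)$ together with the rescaled pair $\mu_d(R_1,R_2)$, being careful to carry along both branches of Theorem \ref{thm-odd}: the generic branch with $(R_1,R_2)=(1,\R)$ when $\Delta_d\not\equiv0$, and the degenerate branch with $(R_1,R_2)=(0,1)$ when $\Delta_d\equiv0$. In either branch the final summand is a nonzero $d$-th power, so the resulting coordinate vector is not identically zero and hence descends to a well-defined element of $\mathbb{P}^d$ mapping to $[p]$ under $\xi$. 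As $[p]$ was arbitrary, $\xi$ is onto.

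The step I expect to be the only genuine obstacle, and the single place where oddness is indispensable, is the absorption identity $\lambda_i\,\ell^d=(\lambda_i^{1/d}\ell)^d$ for coefficients of \emph{arbitrary} sign. For even $d$ the powering map has image $[0,\infty)$, so a negative $\lambda_i$ cannot be transferred into the linear form and the corresponding tuple escapes the (nonnegative) image of the analogous map; this is exactly why the even-degree version of $\xi$ fails to be surjective, in agreement with the observation recorded after this remark. Thus the whole proof rests on the fact that, in odd degree, the sign information carried by the scalars $\lambda_i$ can be moved losslessly into the linear forms, turning the scalar-laden output of Theorem \ref{thm-odd} into a point of the source of $\xi$.
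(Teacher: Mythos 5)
Your central idea --- absorbing the scalars $\lambda_i$ of \eqref{eq:WD_teo_impar} into the linear forms via the identity $\lambda\,\ell^{d}=(\lambda^{1/d}\ell)^{d}$, available over $\mathbb{R}$ precisely because $d$ is odd --- is exactly the mechanism the remark implicitly relies on (the paper offers no argument beyond citing Theorem \ref{thm-odd}), and your diagnosis of why the even case fails is also the intended one.

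There is, however, a genuine gap at the final step, where you claim to ``read off'' a preimage in $\mathbb{P}^{d}$. After absorption you hold $d=2\nu+1$ linear forms $\mu_{2j-1}(x+s_jy)$, $\mu_{2j}(x-s_jy)$, $\mu_d(R_1x+R_2y)$, i.e.\ $2d$ independent real coefficients, whereas a point of the source $\mathbb{P}^{d}$ has only $d+1$ homogeneous coordinates: in the map $\xi$ the two forms of the $i$-th pair are forced to share the same couple $(a_i,b_i)$, so the $i$-th block of the image is $(a_ix+b_iy)^{d}+(a_ix-b_iy)^{d}$, which contains only even powers of $y$. Matching such a block with the $j$-th pair of \eqref{eq:WD_teo_impar} forces $\lambda_{2j-1}=\lambda_{2j}$, which fails in general (in Example \ref{exp-p5} one has $\lambda_1=70/97$ but $\lambda_2=-28/143$). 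Hence the tuple you construct is not a point of the stated source and the argument does not close as written. A complete treatment must either enlarge the source of $\xi$ so that the $+$ and $-$ forms carry independent coefficients, or explain in what non-literal sense the displayed $\xi$ is meant; indeed a dimension count (the paired blocks span only the $(\nu+1)$-dimensional space of forms even in $y$, plus a two-parameter family of pure $d$-th powers) shows the literal $\xi$ cannot be onto once $d\ge 5$. The oddness/absorption step is correct and is where the real content lies; the packaging into $\mathbb{P}^{d}$ is where your proof --- and, to be fair, the remark itself --- needs repair.
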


\section{The algorithm \texttt{Real Waring Decomposition} (RWD) }\label{algoritmo}

\medskip
In this section we present an algorithm to compute a real WD of length at most $d$ for any real binary form. Its correctness is based in some easy result that we quoted for the convenience of the reader. The idea of the algorithm is to guaranty an effective choice of an element of the set $\cG$ in Theorem \ref{thm-odd} or Theorem \ref{thm-even}. When the $s_i$ and $s$ in those theorems are considered as parameters, an appropriate use of Lemma \ref{lema-roots} allows to compute the desired WD and hence to solve effectively $(WD)_d$.

\para

Clearly, given a real binary form one can consider it as a complex binary form and try to use the recent algorithms to find its Waring decomposition where now the $ \lambda_i $ would be, in principle, complex numbers. Observe that if we apply the Sylvester's Algorithm to $p$,  there is no guarantee that the linear forms we obtain have real coefficients. This fact is quite delicate and it relies on the fact that $\mathbb{R}$ is a real closed field  in an essential way. We will review this method before exposing our algorithm. We would like to point out that the linear forms we give for decomposing a real binary form  $p$ in Theorem \ref{thm-odd}  and Theorem \ref{thm-even}  have real coefficients. 

\para

The Fast Algorithm in \cite{BFPT1} is based on Sylvester's Theorem  and the algorithm of P. Comon and B. Mourrain \cite{CM} improving some computational steps by avoiding the incremental construction that involves successively computing the kernel of  Hankel matrices. Both algorithms allow to find a WD with the minimum possible length for the given $p$ with complex coefficients. However, they achieve the result  when  a square-free binary form is found. This is no longer true in the real case. See Example \ref{ex-complex-real}.

\para

It seems to us an interesting question how to make a random choice in the most appropriate way over complex numbers.
In the real case of the WD problem,  we propose an algorithmic treatment of such a choice of linear forms. Consequently, this new algorithm  give another proof of Theorem \ref{thm-odd}  and Theorem \ref{thm-even}. Theorem \ref{thm-correctness} establishes the correctness of this new algorithm. Examples are included in Section \ref{sec-example}.

\para{}
From the computational point of view, the problem of choosing algorithmically  can be  a hard problem. It could be done using the algorithm on connected components in \cite{BPR}, (Algorithm 13.1, page 549) {but it has  high complexity}. In Algorithm \ref{alg-eleccion-Si-impar} and Algorithm   \ref{alg-eleccion-Si-par}, we present a procedure to determine an appropriate choice of real linear binary forms to solve {\it effectively} the problem $(WD)_d$ over the real field.

\para 

Here is an example to illustrate some of  the differences between the two approaches, over $\mathbb{R}$ and over $\mathbb{C}$, of the WD problem  for real binary forms.

\begin{ex}\label{ex-complex-real}
Consider the real binary  form 
\begin{equation}\label{eq-pol-ex}
 p(x,y)=a\,x^3+3b\,x^2y-3a\,xy^2-b\,y^3  \, , \textrm{ for } a\neq 0 , \, b\neq 0 \ . 
 \end{equation}

It is easy to check  it is not possible to find a  WD of length 2 \textit{over $\mathbb{R}$}. Next, consider the matrix as in the Sylvester's Algorithm, see for instance \cite{BCM},
\[\binom{-b\  \,-a\quad  b}{-a\;\quad  b\quad \;a}\]
Its kernel   is generated by $(1,0,1)$, that derives in a square-free polynomial and following the steps $4, 5$ and $6$ of the Algorithm 1 of \cite{BFPT1}, we can find a complex WD of length 2; in fact:
\[
p(x,y)=\left(\frac{a}{2}-\frac{b\,i}{2} \right)(x+i\,y)^3+ \left(\frac{a}{2}+\frac{b\,i}{2} \right)(x-i\,y)^3 \ .
\]
But, {\it over $\mathbb{R}$}, it is necessary to go to length 3. For example, we can compute a  one-parameter WD: 
\begin{equation}\label{eq-ex-con parametro}
p(x,y)=-\frac{a^2+b^2}{2s(as-b)}\,(x+s\,y)^3-\frac{a^2+b^2}{2s(as+b)}\,(x-s\,y)^3+\frac{a^3(1+s^2)}{a^2s^2-b^2}\left(x+\frac{b}{a}\,y \right),    
\end{equation}
for all  $s\neq 0 ,  \pm{b}/{a}$. 
  
  \newpage
Moreover, observe that the situations in which the shortest  decomposition is unique  are certainly scarce (see \cite{GMKP}), when we consider real WD.  It is easy to see that polynomial \eqref{eq-pol-ex} has a unique WD over $\nC$, but not over $\nR$: 
\begin{align*}
 p(x,y)= -&\frac{a^2+b^2}{s(bs^2+2as-b)}(x+sy)^3- \dfrac{(a+bs)^3}{(as-b)(bs^2+2as-b)}\left(x+\frac{-as+b}{a+bs}y\right)^3+ \\ +&\frac{(a^2+b^2)(s^2+1)}{s(as-b)}x^3\,  , \textrm{ for }  s\in \nR \setminus{}\left\{ 0 ,\, \frac{a}{b},\, \frac{b}{a},\, \frac{-a\pm \sqrt{a^2+b^2}}{b}\right\}. 
\end{align*}
that  is a different WD than \eqref{eq-ex-con parametro} since $b\not=0$ and $s\neq 0$.

\end{ex}

\para

Next, we will first establish the main algorithm, Algorithm \ref{alg-WD-new},  then we will go into details with the subroutines for choosing a finite set of appropriated parameters in  Algorithm \ref{alg-eleccion-Si-impar} and Algorithm \ref{alg-eleccion-Si-par}. In general, the linear forms  for the WD  \eqref{eq:WD_teo_impar} in Theorem \ref{thm-odd}  or \eqref{eq:WD_teo_par} in Theorem \ref{thm-even} can be chosen from a semialgebraic set, called $\mathcal{ G}$ in the previous theorems. 

\para 

\begin{lemma} [see for instance \cite{Loj}] \label{lema-roots}
Let $z^n + a_1 z^{n-1} + \cdots +a_n$ be a monic polynomial with complex coefficients. Let $\zeta_1 , \dots ,\zeta_n$ be a complete sequence of its roots. Then, we have
\begin{equation}
    \left(
    |a_i | \leq r \ , \ i =1 , \dots , n 
    \right)
    \Longrightarrow
     \left(
    |\zeta_j | \leq 2 r \ , \ j =1 , \dots , n  
    \right) .
\end{equation}
\end{lemma}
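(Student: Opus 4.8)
The plan is to prove the root bound by a direct estimate. Let $\zeta$ be any root of the polynomial $P(z) = z^n + a_1 z^{n-1} + \cdots + a_n$. If $|\zeta| \leq 1$ there is nothing to prove since $r \geq 0$ would give $|\zeta| \leq 1 \leq 2r$ only when $r \geq 1/2$; so I must handle the small-root case and the large-root case together by a cleaner argument. The standard approach is to assume for contradiction that $|\zeta| > 2r$ and derive that $P(\zeta) \neq 0$.

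First I would write the defining equation $\zeta^n = -(a_1 \zeta^{n-1} + \cdots + a_n)$ and take absolute values. Dividing by $|\zeta|^n$ (legitimate since a nonzero constant term forces $\zeta \neq 0$, and if some $a_i \neq 0$ the only root with $\zeta = 0$ arises when $a_n = 0$, a case handled directly), I obtain
\begin{equation}\label{eq-root-est}
1 = \left| \frac{a_1}{\zeta} + \frac{a_2}{\zeta^2} + \cdots + \frac{a_n}{\zeta^n} \right| \leq \sum_{i=1}^{n} \frac{|a_i|}{|\zeta|^i} \leq r \sum_{i=1}^{n} \frac{1}{|\zeta|^i}.
\end{equation}
Assuming $|\zeta| > 2r$, I want to show the right-hand side is strictly less than $1$, giving a contradiction. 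The key step is to bound the geometric-type sum $\sum_{i\geq 1} |\zeta|^{-i}$. When $|\zeta| > 1$ this is at most $\frac{1}{|\zeta| - 1}$, so the bound \eqref{eq-root-est} yields $1 \leq \frac{r}{|\zeta|-1}$, i.e. $|\zeta| \leq r + 1$; combined with $|\zeta| > 2r$ this already constrains matters, so I would instead sharpen by using $|\zeta| > 2r$ directly: then $\frac{r}{|\zeta|} < \frac{1}{2}$, and $r \sum_{i\geq 1}|\zeta|^{-i} = \frac{r}{|\zeta|-1}$, which for $|\zeta| \geq 2$ is below $1$.

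The main obstacle is the regime where $r$ is small (so $2r$ may be less than $1$) and $|\zeta|$ lies between $1$ and $2$: the crude geometric bound is not immediately decisive there. The cleanest way around this is to observe that the factor $2$ in the conclusion is exactly what absorbs this gap. I would set $t = |\zeta|/r$ (assuming $r > 0$; the case $r = 0$ forces $P(z) = z^n$ and all roots are $0$) and rewrite \eqref{eq-root-est} as $1 \leq \sum_{i=1}^n t^{-i} < \sum_{i=1}^{\infty} t^{-i} = \frac{1}{t-1}$ whenever $t > 1$, which forces $t < 2$, that is $|\zeta| < 2r$. Since $\zeta$ was arbitrary, every root satisfies $|\zeta| \leq 2r$, completing the proof. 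I would present the argument cleanly by normalizing through $t$, which makes the single inequality $\frac{1}{t-1} > 1 \Leftrightarrow t < 2$ carry the whole conclusion and sidesteps the small-$r$ bookkeeping entirely.
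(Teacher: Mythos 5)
The paper offers no proof of this lemma---it is quoted from \cite{Loj}---so there is nothing internal to compare against; your argument has to stand on its own, and as written it does not. The decisive step is the normalization $t=|\zeta|/r$: since $r\,|\zeta|^{-i}=r^{1-i}t^{-i}$, the estimate $1\le r\sum_{i=1}^{n}|\zeta|^{-i}$ becomes $1\le\sum_{i=1}^{n}r^{1-i}t^{-i}$, not $1\le\sum_{i=1}^{n}t^{-i}$. For $r<1$ the discarded factors $r^{1-i}>1$ ($i\ge 2$) go the wrong way, and that is exactly the small-$r$ regime you claimed the substitution would sidestep. Your intermediate bound $|\zeta|\le 1+r$ (the classical Cauchy bound) is correct but is weaker than $|\zeta|\le 2r$ whenever $r<1$.

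This gap cannot be patched, because the implication as printed is false for small $r$: take $z^{2}-r$ with $0<r<1/4$; its coefficients are bounded in modulus by $r$, yet its roots have modulus $\sqrt{r}>2r$. The correct classical statement (and the one in \cite{Loj}) is that every root satisfies $|\zeta|\le 2\max_{i}|a_{i}|^{1/i}$, i.e.\ the hypothesis should be $|a_{i}|\le r^{i}$ rather than $|a_{i}|\le r$ (the two agree only when $r\ge 1$, which is presumably what the paper's applications implicitly arrange). Under the homogeneous hypothesis your plan closes exactly as intended: $1\le\sum_{i}|a_{i}|/|\zeta|^{i}\le\sum_{i}(r/|\zeta|)^{i}<\frac{1}{t-1}$ with $t=|\zeta|/r>1$, forcing $t<2$. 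So the template is right, but it is the homogeneity $|a_{i}|\le r^{i}$ that makes the geometric series collapse to $\frac{1}{t-1}$; without it neither your argument nor any other can prove the lemma as literally stated, and you should flag the misstatement rather than try to prove it.
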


\begin{algorithm}[H]
\caption {Real Waring Decomposition (RWD))}
\label{alg-WD-new}
\SetKwInput{KwData}{Input}
\SetKwInput{KwResult}{Output}
\SetAlgoLined 
\KwData{$p(x,y)= \sum_{i=0}^{d} \binom{d}{i}c_{i}\, x^{i}\, y^{d-i}$, real binary form of  degree $d$.} 
\KwResult{an array  $ \left[\epsilon, {\bf s} ,{\bf R}, \pmb{\lambda} \right]$ such that
\begin{itemize}
    \item if $\epsilon=1$, then ${\bf s}=(s_1 ,\dots ,s_\nu , s_{\nu +1})\in \nR^{\nu +1}$ with $\nu=(d-1)/2$, ${\bf R}=(R_1 ,R_2 )$ and $\pmb{\lambda} =[ \lambda_1 ,\dots ,\lambda_d ]$ verify formula \eqref{eq:WD_teo_impar} in Theorem \ref{thm-odd}.
     \item if $\epsilon=0$, then ${\bf s}=(s,s_1 ,\dots ,s_{\nu-1} , s_{\nu })\in \nR^{\nu +1}$ with $\nu=d/2$, ${\bf R}=(R_1 ,R_2 )$ and $\pmb{\lambda} =[ \lambda_{d-1},\lambda_1 ,\dots ,\lambda_{d-2},\lambda_d ]$ verify formula \eqref{eq:WD_teo_par} in Theorem \ref{thm-even}.
\end{itemize}
 }
 Set  $\nu:=\textrm{Quotient} (d,2)$, $\epsilon:=\textrm{Remainder}(d,2)$, $c:= (c_0 , \dots ,c_d )$ . \\
     \eIf{$\epsilon=1$}
     {Call Algorithm \ref{alg-eleccion-Si-impar}, WDParameters$_{\textrm{odd}}(c)=[{\bf s} , \texttt{delta}]$ . \\ 
     \eIf{$\texttt{delta}=1$}{Define the matrix $M$ as in \eqref{eq:M_impar} by means of the elements of the list {\bf s}.\\
     Define ${\bf R}=(1,s_{\nu+1})$, where $s_{\nu+1} $ is the last element of the list {\bf s}. }{Define the matrix $M$ as $M_0$ in \eqref{eq:M_impar_0} by means of the elements of the list {\bf s}.\\
     Define ${\bf R}=(0,1)$.}
       Define the list $\pmb{\lambda}$ as the unique solution of the system $M \vec{\lambda}= \overline{c} $, where $\overline{c}= (c_d, \dots ,c_0 )^t$ and $\vec{\lambda}=(\lambda_1 ,\dots ,\lambda_d )^t$.
          }
   {Call Algorithm \ref{alg-eleccion-Si-par}, WDParameters$_{\textrm{even}}(c)=\left[{\bf s} , \texttt{delta}\right]$ .\\
        \eIf{$\texttt{delta}=1$}{Define the matrix $M$ as in \eqref{eq:M_par} by means of the elements of the list {\bf s}.\\
     Define ${\bf R}=(1,s_{\nu+1})$, where $s_{\nu+1} $ is the last element of the list {\bf s}. }{Define the matrix $M$ as $M_0$ in \eqref{eq:M_par_0} by means of the elements of the list {\bf s}.\\
     Define ${\bf R}=(0,1)$.}
       Define the list $\pmb{\lambda}$ as the unique solution of the system $M \vec{\lambda}= \overline{c} $, where $\overline{c}= (c_d, \dots ,c_0 )^t$ and $\vec{\lambda}=(\lambda_1 ,\dots ,\lambda_d )^t$.
           }
    \KwRet{ $ \left[\epsilon, {\bf s} ,{\bf R}, \pmb{\lambda} \right]$ }. \label{step:3return}
\end{algorithm}

\para 
Next we will proceed to detail the algorithms that allow us to construct an effective choice of the family of real linear forms to obtain a WD of length at most d. We  introduce some definitions in order to prove the correctness of  Algorithm \ref{alg-WD-new}.

\begin{definition}\label{def-suitable-odd}
Let $p(x,y)$ be a real binary form of odd degree $d=2\nu +1$. Let ${\bf s}=(s_1 ,\dots ,s_\nu ,s_{\nu+1})=(s', s_{\nu+1})$ be a vector in $ \nR^{\nu+1}$. We say that  ${\bf s}$ is {\it a vector of suitable parameters for a WD of $p$} if it satisfies the following conditions:
\begin{itemize}
    \item for $i=1, \dots ,\nu$, we have $s_i \not=0$,
    \item $s_i\not= \pm s_j$, for $i\neq j$,
    \item $s_{\nu +1}=-\dfrac {\Delta_{d-1}(s')}{\Delta_d(s')}$ whenever $\Delta_d(s')\not=0$ and $s_{\nu +1}=0$ otherwise,  with $\Delta_i (s')$ stands for the evaluation at $s'$ of the  polynomial $\Delta_i$ defined in \eqref{eq-det-odd}.
   \end{itemize}
  
\end{definition}

One can easily verify the following result.

\begin{lemma}\label{lema-G-odd}
Let $p$ be a real binary form of odd degree $d=2\nu +1$.  Let ${\bf s }=(\textsc{s},s_{\nu +1})$ be a vector in $\mathbb{R}^{\nu+1}$ of suitable parameters for a WD of $p$. Then the point $\textsc {s}$  belongs to the semialgebraic set $\cG$ defined in \eqref{conj-G-odd}.
\end{lemma}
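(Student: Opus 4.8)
The plan is to read off the three defining conditions of Definition~\ref{def-suitable-odd} and match each of them against one of the pieces in the description \eqref{conj-G-odd} of $\cG=\Omega\setminus(A\cup B\cup D)$. Writing $\textsc{s}=s'=(s_1,\dots,s_\nu)$, it suffices to establish the four memberships $\textsc{s}\in\Omega$, $\textsc{s}\notin A$, $\textsc{s}\notin D$ and $\textsc{s}\notin B$; their conjunction is exactly $\textsc{s}\in\cG$. Throughout I work in the case $\Delta_d(s')\neq0$, which is the case relevant to $\cG$ (the branch $s_{\nu+1}=0$ of Definition~\ref{def-suitable-odd} corresponds to $s'\in\{\Delta_d=0\}$ and is governed by the matrix $M_0$ rather than by $\cG$).

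First I would dispose of the two conditions that translate directly. The first bullet, $s_i\neq0$ for $i=1,\dots,\nu$, says precisely that $s'$ avoids each hyperplane $\{S_i=0\}$, i.e. $\textsc{s}\notin A$; and the second bullet, $s_i\neq\pm s_j$ for $i\neq j$, says that $s'$ avoids each $\{S_i+S_j=0\}$ and each $\{S_i-S_j=0\}$, i.e. $\textsc{s}\notin D$. No computation is needed here. Since we are in the case $\Delta_d(s')\neq0$, the membership $\textsc{s}\in\Omega=\mathbb{R}^\nu\setminus\{\Delta_d=0\}$ is immediate as well.

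The remaining and substantive point is $\textsc{s}\notin B$, and here I would use the third bullet. With $\Delta_d(s')\neq0$ it reads $s_{\nu+1}=-\Delta_{d-1}(s')/\Delta_d(s')$, i.e. $\Delta_{d-1}(s')=-s_{\nu+1}\Delta_d(s')$. Substituting this into the two families of equations cutting out $B$ produces the factorizations
\[
\Delta_{d-1}(s')+s_i\Delta_d(s')=\Delta_d(s')\,(s_i-s_{\nu+1}),\qquad
\Delta_{d-1}(s')-s_i\Delta_d(s')=-\Delta_d(s')\,(s_i+s_{\nu+1}).
\]
Because $\Delta_d(s')\neq0$, the point $\textsc{s}$ lies on one of the defining varieties of $B$ if and only if $s_{\nu+1}=s_i$ or $s_{\nu+1}=-s_i$ for some $i$. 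Hence $\textsc{s}\notin B$ is equivalent to the statement that the extra root $R=s_{\nu+1}$ is distinct from all of $\pm s_1,\dots,\pm s_\nu$, which is exactly the non-redundancy (pairwise non-proportionality of the forms $x\pm s_i y$ and $x+s_{\nu+1}y$) demanded of suitable parameters for a genuine WD.

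The hard part, such as it is, will be this last identification: one must notice the two algebraic identities above and use $\Delta_d(s')\neq0$ to divide out, collapsing the apparently opaque set $B$ into the transparent distinctness condition $s_{\nu+1}\neq\pm s_i$. Once $B$ is understood in these terms, the four memberships combine to give $\textsc{s}\in\cG$, and the lemma follows.
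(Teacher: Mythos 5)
Your proof is correct and is precisely the direct verification that the paper omits (the paper offers only ``One can easily verify the following result''): you match each bullet of Definition~\ref{def-suitable-odd} to one of the pieces $\Omega$, $A$, $D$, $B$ of \eqref{conj-G-odd}, and the only nontrivial step --- substituting $\Delta_{d-1}(s')=-s_{\nu+1}\Delta_d(s')$ to collapse $B$ into the distinctness condition $s_{\nu+1}\neq\pm s_i$, which the second bullet supplies once $i,j$ are read as ranging up to $\nu+1$ (as the authors themselves do in the proof of Proposition~\ref{correct-odd}) --- is carried out correctly. Your caveat about the branch $\Delta_d(s')=0$ is well judged: there $\textsc{s}\notin\Omega\supseteq\mathcal{G}$, so the lemma as literally stated holds only in the case you treat; this is a defect of the paper's formulation (which it also glosses over in the $\Delta_d\equiv 0$ branch), not of your argument.
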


\begin{definition}\label{def-suitable-even}
Let $p(x,y)$ be a real binary form of even degree $d=2\nu $. Let ${\bf s}=(s^*,s_1 ,\dots ,s_{\nu-1} ,s_{\nu+1})=(s', s_{\nu+1})$ be a vector in $ \nR^{\nu+1}$. We say that  ${\bf s}$ is {\it a vector of suitable parameters for a WD of $p$} if it satisfies  the following conditions:
\begin{itemize}
    \item for $i=1, \dots ,\nu-1$, we have $s_i \not=0$,
    
    \item $s_{\nu +1}=-\dfrac {\Delta_{d-1}(s')}{\Delta_d(s')}-s^*$ whenever $\Delta_d(s')\not=0$ and $s_{\nu +1}=0$ otherwise,  with $\Delta_i (s')$ stands for the evaluation at $s'$ of the  polynomial $\Delta_i$ defined in \eqref{eq-det-even}.
    \item $s^*\neq 0$ and $ s_i \neq \pm s^*$ for  $i =1, \dots ,\nu-1 , \nu+1$, and 
    \item $s_i\not= \pm s_j$, for all $i\neq j$.
   \end{itemize}
  \end{definition}

The following result follows from the definition of \eqref{conj-G-even}.

\begin{lemma}\label{lema-G-even}
Let $p$ be a real binary form of odd degree $d=2\nu +1$.  Let ${\bf s }=(\textsc{s},s_{\nu +1})$ be a vector in $\mathbb{R}^{\nu+1}$ of suitable parameters for a WD of $p$. Then the point $\textsc {s}$  belongs to the semialgebraic set $\cG$ defined in  \eqref{conj-G-even}.
\end{lemma}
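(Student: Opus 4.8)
The plan is to prove the membership $\textsc{s}\in\cG$ purely by unwinding the two definitions involved, exactly as in the companion Lemma \ref{lema-G-odd}. Since $p$ has odd degree $d=2\nu+1$, the phrase ``suitable parameters for a WD of $p$'' is governed by Definition \ref{def-suitable-odd}, and the data attached to $p$ are those of the odd construction: the coefficients $\Delta_0,\dots,\Delta_d$ of \eqref{eq-det-odd} and the set $\cG=\Omega\setminus(A\cup B\cup D)$ with $\Omega=\nR^{\nu}\setminus\{\Delta_d=0\}$. Writing ${\bf s}=(\textsc{s},s_{\nu+1})$ with $\textsc{s}=(s_1,\dots,s_\nu)$, I would check, one excluded locus at a time, that $\textsc{s}$ lies in $\Omega$ and avoids each of $A$, $B$ and $D$, so that $\textsc{s}\in\cG$. (I would also note to the reader that, although the statement cites \eqref{conj-G-even}, an odd-degree $p$ is attached to the odd construction, so the pertinent set is the one in \eqref{conj-G-odd}; the two lemmas are then word-for-word parallel.)

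First I would dispose of $\Omega$: the third bullet of Definition \ref{def-suitable-odd} places us in the branch $s_{\nu+1}=-\Delta_{d-1}(\textsc{s})/\Delta_d(\textsc{s})$, whose very definition presupposes $\Delta_d(\textsc{s})\neq0$, hence $\textsc{s}\in\Omega$. Next, the first bullet $s_i\neq0$ gives directly $\textsc{s}\notin A=\bigcup_{i}\{S_i=0\}$, and the part of the second bullet $s_i\neq\pm s_j$ with $i,j\le\nu$ gives $\textsc{s}\notin D$. Finally, for $B=\bigcup_i\{\Delta_{d-1}+S_i\Delta_d=0\}\cup\bigcup_i\{\Delta_{d-1}-S_i\Delta_d=0\}$ I would clear the nonzero denominator $\Delta_d(\textsc{s})$: the equation $\Delta_{d-1}(\textsc{s})\pm s_i\Delta_d(\textsc{s})=0$ is equivalent to $s_{\nu+1}=\mp s_i$, which is precisely excluded by the distinctness clause of Definition \ref{def-suitable-odd} (the pair indexed by $\{i,\nu+1\}$ in $s_i\neq\pm s_j$). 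Thus $\textsc{s}\notin B$, and combining the four points yields $\textsc{s}\in\Omega\setminus(A\cup B\cup D)=\cG$.

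The argument is essentially bookkeeping, so the one genuine subtlety — and the step I would treat most carefully — is the case distinction around $\Delta_d$. Definition \ref{def-suitable-odd} also admits the degenerate branch $\Delta_d(\textsc{s})=0$, $s_{\nu+1}=0$; there $\textsc{s}$ lies on $\{\Delta_d=0\}$ and hence \emph{outside} $\Omega\supseteq\cG$, so that branch corresponds to the matrix $M_0$ of \eqref{eq:M_impar_0} in Theorem \ref{thm-odd} and not to $\cG$. The conclusion therefore holds exactly in the non-degenerate branch, which is the only branch for which $\cG$ is the relevant object, and I would state this restriction explicitly rather than gloss over it. The second point requiring care is verifying that the negation of the defining equations of $B$, after clearing $\Delta_d(\textsc{s})\neq0$, is \emph{literally} the condition $s_{\nu+1}\neq\pm s_i$ coming from distinctness, rather than a merely analogous statement.
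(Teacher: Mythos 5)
Your unwinding argument is sound, and it is essentially the (unwritten) proof the paper has in mind: the paper offers no proof of this lemma beyond the sentence that it ``follows from the definition''. The real issue is that you resolved the statement's internal inconsistency in the opposite direction from the paper's intent. The label, the placement immediately after Definition~\ref{def-suitable-even}, the introductory sentence citing \eqref{conj-G-even}, and the way the lemma is consumed in Proposition~\ref{correct-even} and Theorem~\ref{thm-correctness} all indicate that the intended statement is the \emph{even}-degree analogue: $p$ of even degree $d=2\nu$, ${\bf s}=(s^{*},s_{1},\dots,s_{\nu-1},s_{\nu+1})$ suitable in the sense of Definition~\ref{def-suitable-even}, with conclusion $\textsc{s}=(s^{*},s_{1},\dots,s_{\nu-1})\in\cG$ for the $\cG$ of \eqref{conj-G-even}; the words ``odd degree $d=2\nu+1$'' are the typo. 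By keeping ``odd'' and swapping the reference to \eqref{conj-G-odd}, you have in effect re-proved Lemma~\ref{lema-G-odd}, which the paper states separately, and the content this lemma is meant to add is delegated to your claim of ``word-for-word'' parallelism. That parallelism is true but not literal, and the shifted computation is precisely the substance here: with $R=s_{\nu+1}=-\Delta_{d-1}(\textsc{s})/\Delta_{d}(\textsc{s})-s^{*}$, clearing the nonzero denominator gives $\Delta_{d-1}+2s^{*}\Delta_{d}=0\iff R=s^{*}$ and $\Delta_{d-1}+(s^{*}\pm s_{i})\Delta_{d}=0\iff R=\pm s_{i}$, so avoidance of the even-case $B$ is exactly $R\neq s^{*}$ and $R\neq\pm s_{i}$, supplied by the third and fourth bullets of Definition~\ref{def-suitable-even}; avoidance of $D$ additionally needs $s^{*}\neq\pm s_{i}$, again a bullet, while $A$ and $\Omega$ are handled as in your odd case. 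Writing out these checks is what a proof of the lemma as intended requires. (Incidentally, in your odd-case check the sign correspondence is $\Delta_{d-1}\pm s_{i}\Delta_{d}=0\iff s_{\nu+1}=\pm s_{i}$, not $\mp s_{i}$; this is harmless since both signs are excluded.)

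Your caveat on the degenerate branch is correct and worth keeping: if $\Delta_{d}(\textsc{s})=0$ and $s_{\nu+1}=0$, then $\textsc{s}\in\{\Delta_{d}=0\}$, hence $\textsc{s}\notin\Omega\supseteq\cG$, so membership in $\cG$ can only be asserted in the branch $\Delta_{d}(\textsc{s})\neq0$. The paper glosses over this point: the \texttt{delta}$\,=0$ cases in the proofs of Propositions~\ref{correct-odd} and \ref{correct-even} assert $\textsc{s}\in\cG$, which is literally incompatible with \eqref{conj-G-odd} and \eqref{conj-G-even} unless $\cG$ is tacitly redefined (dropping the $\Omega$ and $B$ conditions) when $\Delta_{d}$ is the zero polynomial. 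Your explicit restriction to the non-degenerate branch is the cleaner formulation; just attach it, and the computations above, to the even-degree construction.
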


\begin{algorithm}[H]
\caption {WDParameters\_odd}
\label{alg-eleccion-Si-impar}
\SetKwInput{KwData}{Input}
\SetKwInput{KwResult}{Output}
\SetAlgoLined 
\KwData{$p(x,y)= \sum_{i=0}^{d} \binom{d}{i}c_{i}\, x^{i}\, y^{d-i}$, real binary form of odd degree $d=2\nu+1$.} 
\KwResult{an array $\left[{\bf s} , \texttt{delta}\right]$ where ${\bf s}\in\nR^{\nu+1}$   is a vector   of suitable parameters for the WD of $p$ and \texttt{delta} is a switch that returns $0$ if  or $1$ otherwise. }
\medskip
    Set  $\nu:=\ (d-1)/{2}$ and \ \texttt{delta}:=0\ .
    Define $s_i:=i$, for $i=1 ,\dots ,\nu -1$, and $s'=(s_1 ,\dots ,s_{\nu -1})$.\\
       Compute the polynomials $\Delta_d (S_1 ,\dots ,S_\nu )$ and $\Delta_{d-1}(S_1 ,\dots ,S_\nu )$ from Form. \eqref{eq-det-odd}.
       \\
    Compute $ \pmb{\Delta_{d}}(S_\nu ) = \Delta_d (s', S_\nu )$ and  $ \pmb{\Delta_{d-1}}(S_\nu ) = \Delta_{d-1} (s', S_\nu )$
    .\\
   \eIf{$\pmb{\Delta_{d}}(S_\nu )\equiv 0$}{Define $\delta_{d-1}$ as the maximum of the absolute value of the coefficients of the polynomial $\pmb{\Delta_{d-1}}(S_\nu ) $
    \\ 
    Compute $m=\max \{ \nu-1, \delta_{d-1}\}$
    \\
    Define $s_\nu =2m+1$.  \\
    \label{step:s_nu_odd_0}
    Set ${\bf s}:=(s_1,\ldots,s_\nu,0)$ and go to step \ref{step:return}. \label{step:caso11}}
   {\texttt{delta}$\,\leftarrow 1$\\
   Define $\delta_{d}$ as the maximum of the absolute value of the coefficients of the polynomial $\pmb{\Delta_{d}}(S_\nu ) $.
     \\
    Define $\delta_{d-1,0}^{+}$ as the maximum of the absolute value of the coefficients of the polynomial $\pmb{\Delta_{d-1}}(S_\nu ) -S_\nu \pmb{\Delta_{d}}(S_\nu )$.
    \\
    Define $\delta_{d-1,0}^{-}$ as the maximum of the absolute value of the coefficients of the polynomial $\pmb{\Delta_{d-1}}(S_\nu ) +S_\nu \pmb{\Delta_{d}}(S_\nu )$.\\
   \For{ $i\leftarrow  1$ \KwTo  $\nu-1$}{
     \begin{itemize}
         \item Define $\delta_{d-1,i}^{+}$ as the maximum of the absolute value of the coefficients of the polynomial $\pmb{\Delta_{d-1}}(S_\nu ) -s_i \pmb{\Delta_{d}}(S_\nu )$.
         \item Define $\delta_{d-1,i}^{-}$ as the maximum of the absolute value of the coefficients of the polynomial $\pmb{\Delta_{d-1}}(S_\nu ) +s_i \pmb{\Delta_{d}}(S_\nu )$.
     \end{itemize}}
Compute $m=\max \{ \nu-1, \delta_d ,\delta_{d-1,0}^{\pm},\dots ,  \delta_{d-1, \nu-1}^{\pm}\}$.\\
   Define $s_\nu =2m+1$.  \\
   \label{step:s_nu_odd}
   Compute $R= -\dfrac{\pmb{\Delta_{d-1}}(s_\nu )}{\pmb{\Delta_{d}}(s_\nu )}$.\\ \label{step:R_odd}
   Set ${\bf s}:=(s_1,\ldots,s_\nu,R)$ and go to step \ref{step:return}.\label{step:caso14}
}
\KwRet{   $ \left[{\bf s} , \texttt{delta}\right]$}.\\ \label{step:return}
\end{algorithm}

\para 

The following proposition guarantees the correctness of Algorithm \ref{alg-eleccion-Si-impar}.

\begin{proposition}\label{correct-odd} Let $p(x,y)$ be a real binary form of odd degree $d=2\nu +1$. The output list $ \left[{\bf s} , \texttt{delta}\right]$ from Algorithm \ref{alg-eleccion-Si-impar} applied to $p$ defines the vector  ${\bf s}=(s_1,\ldots,s_\nu , s_{\nu +1})$ of suitable parameters for a WD of $p$.

Moreover, if $\texttt{delta}=1$ (respectively $\texttt{delta}=0$) then the linear system \eqref{eq:sistema1}  (respec\-tively \eqref{eq:d-impar-0}) using the vector ${\bf s}$ has  unique solution.
\end{proposition}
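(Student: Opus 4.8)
The plan is to prove the two assertions separately: that ${\bf s}$ fulfils every clause of Definition \ref{def-suitable-odd}, and that the linear system attached to it is uniquely solvable. I would organize the argument along the switch \texttt{delta}, i.e.\ according to whether $\pmb{\Delta_d}(S_\nu)$ is identically zero or not, and I would freely reuse two facts isolated in the proof of Theorem \ref{thm-odd}: (i) writing $h^*(T)=\sum_{i=0}^d\Delta_i(s')\,T^i$, each $\pm s_j$ is a root of $h^*$ because setting $X_i=(\pm s_j)^i$ makes the first column of $V$ coincide with an already present column, forcing $\det V=0$; and (ii) the augmented determinants satisfy $\det(M\mid\bar c)=\pm h^*(R)$ and $\det(M_0\mid\bar c)=\pm\Delta_d(s')$, obtained by specializing the first column of $V$ to the powers of $R$, respectively to $e_d=(0,\dots,0,1)^t$.

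First I would dispose of the purely combinatorial clauses. The algorithm sets $s_i=i$ for $1\le i\le \nu-1$ and $s_\nu=2m+1$ with $m\ge\nu-1$, so $s_1,\dots,s_\nu$ are positive and pairwise distinct with $s_\nu\ge 2\nu-1>\nu-1\ge s_i$; hence $s_i\ne 0$ and $s_i\ne\pm s_j$ for all $i\ne j$ in $\{1,\dots,\nu\}$. For the clause fixing $s_{\nu+1}$, the substitution $S_i=i$ $(i<\nu)$ gives $\pmb{\Delta_d}(s_\nu)=\Delta_d(s')$ and $\pmb{\Delta_{d-1}}(s_\nu)=\Delta_{d-1}(s')$ with $s'=(s_1,\dots,s_\nu)$. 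If $\texttt{delta}=0$ then $\pmb{\Delta_d}\equiv 0$ forces $\Delta_d(s')=0$, so the assignment $s_{\nu+1}=0$ is exactly the ``otherwise'' value demanded by Definition \ref{def-suitable-odd}; if $\texttt{delta}=1$ the algorithm returns $s_{\nu+1}=R=-\pmb{\Delta_{d-1}}(s_\nu)/\pmb{\Delta_d}(s_\nu)=-\Delta_{d-1}(s')/\Delta_d(s')$, which is the prescribed value once one knows $\Delta_d(s')\ne 0$.

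The heart of the matter is this non-vanishing, together with the separation of $R$ from the $\pm s_i$, and it is where Lemma \ref{lema-roots} is used. In the branch $\texttt{delta}=1$ I would consider the finitely many univariate polynomials $\pmb{\Delta_d}(S_\nu)$, $\pmb{\Delta_{d-1}}(S_\nu)\mp S_\nu\pmb{\Delta_d}(S_\nu)$ and $\pmb{\Delta_{d-1}}(S_\nu)\mp s_i\pmb{\Delta_d}(S_\nu)$ $(1\le i\le\nu-1)$; a real number $s_\nu$ is a root of one of the last two exactly when $R=\pm s_\nu$, resp.\ $R=\pm s_i$. Lemma \ref{lema-roots} furnishes an explicit bound on the moduli of the roots of each of these polynomials in terms of the maximal modulus of its coefficients, and the quantities $\delta_d,\delta_{d-1,0}^{\pm},\dots,\delta_{d-1,\nu-1}^{\pm}$ are defined to be precisely those maxima; taking $m$ to be their maximum and $s_\nu=2m+1$ therefore places $s_\nu$ strictly beyond every such root. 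It follows that $\Delta_d(s')=\pmb{\Delta_d}(s_\nu)\ne 0$, which legitimizes the formula for $s_{\nu+1}$, and that $R\ne\pm s_i$ for all $i=1,\dots,\nu$; thus ${\bf s}$ is a vector of suitable parameters. I expect this to be the main obstacle, because Lemma \ref{lema-roots} is stated for \emph{monic} polynomials: turning the computable coefficient maxima of the (non-monic) combinations of $\pmb{\Delta_{d-1}}$ and $\pmb{\Delta_d}$ into genuine bounds on their roots---so that $2m+1$ provably clears all of them---is the quantitative core of the correctness statement, and it must also be checked that none of these combinations is identically zero.

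Finally I would deduce unique solvability by a rank comparison. If $\texttt{delta}=1$, the previous paragraph shows the $d$ nodes $\pm s_1,\dots,\pm s_\nu,R$ are pairwise distinct reals, so the $d\times d$ Vandermonde minor on rows $0,\dots,d-1$ of $M$ in \eqref{eq:M_impar} is nonzero and $\operatorname{rank}M=d$; moreover $\det(M\mid\bar c)=\pm h^*(R)=0$, since $h^*$ has degree $d$ with leading coefficient $\Delta_d(s')\ne 0$, vanishes at the $d-1$ points $\pm s_i$, and hence by Vieta has $-\Delta_{d-1}(s')/\Delta_d(s')=R$ as its remaining root. As $\operatorname{rank}M=\operatorname{rank}(M\mid\bar c)=d$, the system \eqref{eq:sistema1} has a unique solution. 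If $\texttt{delta}=0$ the argument is identical with $M_0$ of \eqref{eq:M_impar_0}: the $d-1$ distinct nonzero nodes $\pm s_i$ make the minor on rows $0,\dots,d-2,d$ (expanded along the last column $e_d$) a nonzero Vandermonde, so $\operatorname{rank}M_0=d$, while $\det(M_0\mid\bar c)=\pm\Delta_d(s')=0$ by the case hypothesis supplies consistency; hence \eqref{eq:d-impar-0} is uniquely solvable.
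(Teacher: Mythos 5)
Your argument is correct and follows essentially the same route as the paper's own proof: the combinatorial clauses of Definition~\ref{def-suitable-odd} come from the explicit choice $s_i=i$, $s_\nu=2m+1$; the non-vanishing of $\pmb{\Delta_{d}}(s_\nu)$ and the separation $R\neq\pm s_i$ come from Lemma~\ref{lema-roots} applied to the auxiliary polynomials whose coefficient maxima define $m$ (the paper packages exactly this as the rational functions $\Psi_{i,\pm}$); and unique solvability comes from the distinctness of the nodes (a nonzero Vandermonde minor) together with the vanishing of $\det(M\mid\bar c)$. The only substantive difference is that you make explicit the consistency check $\det(M\mid\bar c)=\pm h^*(R)=0$, respectively $\det(M_0\mid\bar c)=\pm\Delta_d(s')=0$, which the paper leaves implicit by appeal to the proof of Theorem~\ref{thm-odd}. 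The two caveats you flag --- that Lemma~\ref{lema-roots} is stated for \emph{monic} polynomials, so the raw coefficient maxima $\delta_d,\delta_{d-1,i}^{\pm}$ must still be converted into honest root bounds, and that one must rule out one of the combinations $\pmb{\Delta_{d-1}}\mp s_i\pmb{\Delta_d}$ being identically zero (in which case $R=\pm s_i$ for every admissible $s_\nu$ and no choice of $s_\nu$ can help) --- are genuine, and the paper's proof passes over both in silence; your version is, if anything, the more careful of the two.
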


In the following proof we use the notations in Algorithm \ref{alg-eleccion-Si-impar}.
\newpage

\begin{proof} Let $ \left[{\bf s} , \texttt{delta}\right]$ be the returned array of Algorithm \ref{alg-eleccion-Si-impar}.
First assume    $\texttt{delta}=1$. Then ${\bf s}:=(s_1,\ldots,s_\nu,R)$ with $R= -\dfrac{\pmb{\Delta_{d-1}}(s_\nu )}{\pmb{\Delta_{d}}(s_\nu )} $. For $i=1,\dots , \nu-1$, consider the rational functions:
\begin{equation}
\Psi_{i,+} (S_\nu )=- \dfrac{\pmb{\Delta_{d-1}}(S_\nu )}{\pmb{\Delta_{d}}(S_\nu )}-s_i \ , \quad
   \Psi_{i,-} (S_\nu )=- \dfrac{\pmb{\Delta_{d-1}}(S_\nu )}{\pmb{\Delta_{d}}(S_\nu )}+s_i \ ,    
\end{equation}
and also
\begin{equation}
\Psi_{0,+} (S_\nu )=- \dfrac{\pmb{\Delta_{d-1}}(S_\nu )}{\pmb{\Delta_{d}}(S_\nu )}-S_\nu \ , \quad
   \Psi_{0,-} (S_\nu )=- \dfrac{\pmb{\Delta_{d-1}}(S_\nu )}{\pmb{\Delta_{d}}(S_\nu )}+S_\nu \ .    
\end{equation}
Then, for $i=0,\dots , \nu$, we obtain that
\begin{equation}
 \Psi_{i,+} (s_\nu )=- \dfrac{\pmb{\Delta_{d-1}}(s_\nu )+s_i \pmb{\Delta_{d}}(s_\nu )}{\pmb{\Delta_{d}}(s_\nu )} \ , \quad
   \Psi_{i,-} (s_\nu )=- \dfrac{\pmb{\Delta_{d-1}}(s_\nu )-s_i \pmb{\Delta_{d}}(s_\nu )}{\pmb{\Delta_{d}}(s_\nu )}    
\end{equation}
are nonzero real numbers, because of Lemma \ref{lema-roots} and the choice made in Step \ref{step:s_nu_odd} for $s_{\nu}$. Moreover, $R\neq \pm s_i$, for $i=1, \dots , \nu$. 

As a consequence of the previous construction, we obtain 
 the vector $\textbf{s}=(s_1,\ldots,s_\nu,s_{\nu+1} )$ satisfying the requirements of definition \ref{def-suitable-odd}, since $s_i\neq \pm s_\nu$, for $i\neq\nu$, by choice in Step \ref{step:s_nu_odd}, and $\Psi_{i,\pm} (s_\nu )\neq 0$ for all $i$, {and also $s_{\nu+1}=R$ satisfies the third condition according to the Step \ref{step:R_odd}}.

Moreover the linear system \eqref{eq:sistema1} has a unique solution, since $s_i\neq \pm s_\nu$, for $i=1,\dots ,\nu-1$, and $R\neq \pm s_i$, by choice in Step \ref{step:s_nu_odd}.

Now assume    $\texttt{delta}=0$. Then define $s_\nu $ as in Step \ref{step:s_nu_odd_0}. Hence  $s_i\neq \pm s_j$, for all $i\neq j$, so the linear system \eqref{eq:d-impar-0} has a unique solution and the point $\textsc{s}=(s_1,\ldots,s_\nu )$ is in the semialgebraic set $\cG$ defined in \eqref{conj-G-odd}, in the proof of Theorem \ref{thm-odd}.

As a result of all the above we have proved the required statement. 
\end{proof}

\begin{obs}
Observe that, as a consequence of the previous proof, we obtain that the point $\textsc{s}=(s_1,\ldots,s_\nu )$ is in the semialgebraic set $\cG$ defined in \eqref{conj-G-odd}, in the proof of Theorem \ref{thm-odd}, since $s_i\neq \pm s_\nu$, for $i\neq\nu$, by choice in Step \ref{step:s_nu_odd}, and $\Psi_{i,\pm} (s_\nu )\!\neq\! 0$ for all $i$.
\end{obs}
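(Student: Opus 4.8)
The plan is to unpack the definition \eqref{conj-G-odd} of $\cG=\Omega\setminus(A\cup B\cup D)$ into its four membership requirements and to check each one directly from data already produced inside the proof of Proposition \ref{correct-odd}, working only in the branch $\texttt{delta}=1$ (the branch $\texttt{delta}=0$ was already settled inside that proof, where the conclusion $\textsc{s}\in\cG$ was stated explicitly). Throughout I would use the identity $\pmb{\Delta_j}(s_\nu)=\Delta_j(s',s_\nu)=\Delta_j(\textsc{s})$ for $j=d-1,d$, so that the quantities appearing in $\Omega$, in $B$, and in the rational functions $\Psi_{i,\pm}$ are literally the same object evaluated at the chosen point $\textsc{s}$.

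First I would verify $\textsc{s}\in\Omega$, i.e. $\Delta_d(\textsc{s})=\pmb{\Delta_d}(s_\nu)\neq 0$. This is already implicit in the proof of Proposition \ref{correct-odd}: the $\Psi_{i,\pm}(s_\nu)$ are there shown to be finite real numbers, which forces their common denominator $\pmb{\Delta_d}(s_\nu)$ to be nonzero, and this in turn rests on Lemma \ref{lema-roots} together with the choice of $s_\nu$ in Step \ref{step:s_nu_odd}. Next, $\textsc{s}\notin A$ is immediate, since every coordinate is strictly positive: $s_i=i\geq 1$ for $i\leq\nu-1$ and $s_\nu=2m+1\geq 1$. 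For $\textsc{s}\notin D$ I would reuse the inequalities already recorded in the proof: the $s_i=i$ are distinct positive integers, and $s_\nu=2m+1>\nu-1\geq s_i$ for $i<\nu$, so $s_i\neq\pm s_j$ for all $i\neq j$; this is precisely the statement $s_i\neq\pm s_\nu$ noted in the proof.

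The remaining requirement $\textsc{s}\notin B$ is where the $\Psi$ functions enter. The key computation is the identity $\Delta_{d-1}(\textsc{s})\pm s_i\Delta_d(\textsc{s})=-\,\Delta_d(\textsc{s})\,\Psi_{i,\pm}(s_\nu)$, valid for $i=1,\dots,\nu-1$ with the fixed integers $s_i$, while the index $i=\nu$ of $B$ corresponds to $\Psi_{0,\pm}$, in which the free variable $S_\nu$ evaluated at $s_\nu$ plays the role of the coordinate. Since the proof of Proposition \ref{correct-odd} shows $\Psi_{i,\pm}(s_\nu)\neq 0$ for every relevant index, and we have just seen $\Delta_d(\textsc{s})\neq 0$, each expression $\Delta_{d-1}(\textsc{s})\pm s_i\Delta_d(\textsc{s})$ is nonzero, so $\textsc{s}$ lies off every hyperplane defining $B$. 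Combining the four checks yields $\textsc{s}\in\Omega\setminus(A\cup B\cup D)=\cG$, as claimed.

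The only delicate point, and hence the main obstacle, is purely bookkeeping: matching the index ranges. The sets $A$ and $B$ run over $i=1,\dots,\nu$ and $D$ over $i<j$, whereas the rational functions are indexed by $i=0,1,\dots,\nu-1$. One must identify the $i=\nu$ case of $B$ with the variable case $\Psi_{0,\pm}$ and the cases $i=1,\dots,\nu-1$ with the fixed-integer $\Psi_{i,\pm}$, while keeping track of the sign correspondence between the $\pm$ in $B$ and the $\pm$ in $\Psi_{i,\pm}$. Once this correspondence is fixed, no further computation is needed: membership in $\cG$ is a direct consequence of the nonvanishing and separation statements already established in the proof of the proposition.
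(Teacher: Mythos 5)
Your proposal is correct and follows exactly the route the paper intends: the remark is stated there as an immediate consequence of the proof of Proposition \ref{correct-odd}, and your four membership checks ($\textsc{s}\in\Omega$ via $\pmb{\Delta_d}(s_\nu)\neq 0$, $\textsc{s}\notin A$ and $\textsc{s}\notin D$ from the positivity and separation of the $s_i$ forced by Step \ref{step:s_nu_odd}, and $\textsc{s}\notin B$ via the identity $\Delta_{d-1}(\textsc{s})\pm s_i\Delta_d(\textsc{s})=-\Delta_d(\textsc{s})\Psi_{i,\pm}(s_\nu)$ with the index $i=\nu$ of $B$ matched to $\Psi_{0,\pm}$) are precisely the facts the paper's one-line justification appeals to. Your explicit handling of the index and sign correspondence between $B$ and the $\Psi_{i,\pm}$ is a faithful, slightly more detailed rendering of the same argument, so there is nothing further to add.
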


\para

\begin{algorithm}[H]
\caption {WDParameters\_even}
\label{alg-eleccion-Si-par}
\SetKwInput{KwData}{Input}
\SetKwInput{KwResult}{Output}
\SetAlgoLined 
\KwData{$p(x,y)= \sum_{i=0}^{d} \binom{d}{i}c_{i}\, x^{i}\, y^{d-i}$, real binary form of even degree $d$.} 
\KwResult{a vector  $(s,\,s_i),\,i=1,\ldots,\lfloor{\frac{d}{2}}\rfloor-1$, of suitable parameters for the WD of $p$ and a switch, \texttt{delta}, that returns zero when $\Delta_d=0$ or $1$ otherwise. }

\para 

    Set  $\nu:={d}/{2}$, \ \text{and } \texttt{delta}:=0. \\
     Define $s_i:=i$, for $i=1 ,\dots ,\nu-1$, and $s'':=(s_1 ,\dots ,s_{\nu -1})$. \\
       Compute the polynomials $\Delta_d (S, S_1 ,\dots ,S_{\nu-1} )$ and $\Delta_{d-1}(S, S_1 ,\dots ,S_{\nu-1})$ from formula \eqref{eq-det-even}
       .\\
    Compute $ \pmb{\Delta_{d}}(S ) = \Delta_d (S, s'' )$ and  $ \pmb{\Delta_{d-1}}(S ) = \Delta_{d-1} (S, s'' )$.
    \\
   \eIf{$\pmb{\Delta_{d}}(S )\equiv 0$}{Define $\delta_{d-1}$ as the maximum of the absolute value of the coefficients of the polynomial $\pmb{\Delta_{d-1}}(S ) $.
    \\ 
    Compute $m=\max \{ \nu-1, \delta_{d-1}\}$.
    \\
    Define $s =2m+1$.  \\ \label{step:s_nu_even_0}
    Set ${\bf s}:=(s,s_1,\ldots,s_{\nu-1},0)$ and go to step \ref{step:return_par}. \label{step:caso21}}{
    \texttt{delta}$\,\leftarrow 1$\\
   Define $\delta_{d}$ as the maximum of the absolute value of the coefficients of the polynomial $\pmb{\Delta_{d}}(S ) $.
     \\
       Define $\delta_{d-1,0}^{-}$ as the maximum of the absolute value of the coefficients of the polynomial $\pmb{\Delta_{d-1}}(S ) +2 S \pmb{\Delta_{d}}(S )$. \\
   \For{ $i\leftarrow  1$ \KwTo  $\nu-1$}{
     \begin{itemize}
         \item Define $\delta_{d-1,i}^{+}$ as the maximum of the absolute value of the coefficients of the polynomial $\pmb{\Delta_{d-1}}(S ) -(S+s_i) \pmb{\Delta_{d}}(S )$.
         \item Define $\delta_{d-1,i}^{-}$ as the maximum of the absolute value of the coefficients of the polynomial $\pmb{\Delta_{d-1}}(S ) +(S- s_i ) \pmb{\Delta_{d}}(S )$.
     \end{itemize}}
Compute $m=\max \{ \nu-1, \delta_d ,\delta_{d-1,0}^{-},\delta_{d-1,1}^{\pm},\dots ,  \delta_{d-1, \nu-1}^{\pm} \}$.
\\
   Define $s =2m+1$.  \\ \label{step:s_even}
   Compute $R= -\dfrac{\pmb{\Delta_{d-1}}(s )}{\pmb{\Delta_{d}}(s )}-s$. \\
   \label{step:R_even}
   Set ${\bf s}:=(s, s_1,\ldots,s_{\nu-1},R)$ and go to step \ref{step:return_par}.\label{step:caso24}
}
\KwRet{ $ \left[{\bf s} , \texttt{delta}\right]$  }. \\ \label{step:return_par}
\end{algorithm}
\para 

The following proposition guarantees the correctness of Algorithm \ref{alg-eleccion-Si-par}.

\begin{proposition}\label{correct-even} Let $p(x,y)$ be a real binary form of even degree $d=2\nu$. The output list $ \left[{\bf s} , \texttt{delta}\right]$ from Algorithm \ref{alg-eleccion-Si-par} applied to $p$ defines the vector  $\textbf{s}=(s,s_1,\ldots,s_{\nu-1} , s_{\nu +1})$ of suitable parameters for a WD of $p$.

Moreover, if $\texttt{delta}=1$ (respectively $\texttt{delta}=0$) then the linear system \eqref{eq:sistema2}  (respectively \eqref{eq:sistema-par-0}) using the vector ${\bf s}$ has  unique solution.
\end{proposition}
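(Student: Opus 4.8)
The plan is to follow the proof of Proposition~\ref{correct-odd} step by step, the only genuinely new feature being the extra free parameter $s=s^{*}$ carried by the linear form $x+s^{*}y$. As there, I would verify two things for the output $\mathbf{s}$: that it satisfies every clause of Definition~\ref{def-suitable-even}, and that the associated $(d+1)\times d$ system has a unique solution. I treat the two branches of Algorithm~\ref{alg-eleccion-Si-par} in turn.

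Assume first $\pmb{\Delta_{d}}(S)\not\equiv0$, i.e.\ $\texttt{delta}=1$. The assigned values $s_i:=i$ are distinct positive integers, which settles $s_i\neq0$ and $s_i\neq\pm s_j$ immediately. The crux is to pin down the last slope $R=-\pmb{\Delta_{d-1}}(s)/\pmb{\Delta_{d}}(s)-s$ produced in Step~\ref{step:R_even}. I would record the elementary equivalences, which reproduce exactly the hypersurfaces defining the set $B$ of \eqref{conj-G-even}: $R$ is defined precisely when $\pmb{\Delta_{d}}(s)\neq0$; $R=s$ is equivalent to the vanishing at $s$ of $\pmb{\Delta_{d-1}}(S)+2S\,\pmb{\Delta_{d}}(S)$; and $R=\pm s_i$ is equivalent to the vanishing at $s$ of $\pmb{\Delta_{d-1}}(S)+(S\pm s_i)\pmb{\Delta_{d}}(S)$. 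Each of these is a \emph{nonzero} polynomial in $S$ (because $\pmb{\Delta_{d}}\not\equiv0$), and the algorithm has bounded the moduli of their coefficients by $\delta_d$, $\delta_{d-1,0}^{-}$ and $\delta_{d-1,i}^{\pm}$. By Lemma~\ref{lema-roots} every real root of these polynomials is dominated by twice its coefficient bound, hence by $2m$ for the number $m$ assembled in Step~\ref{step:s_even}; therefore $s=2m+1$ is a root of none of them, which yields $\pmb{\Delta_{d}}(s)\neq0$, $R\neq s$ and $R\neq\pm s_i$. Finally $s=2m+1>\nu-1\geq s_i>0$ gives $s^{*}\neq0$ and $s\neq\pm s_i$. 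Thus $\mathbf{s}$ meets Definition~\ref{def-suitable-even}, and Lemma~\ref{lema-G-even} places the truncation $\textsc{s}$ in $\mathcal{G}$.

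For unicity in this branch, the $d$ nodes $s,\pm s_1,\dots,\pm s_{\nu-1},R$ are now pairwise distinct, so the matrix $M$ in \eqref{eq:M_par} is a Vandermonde matrix of full column rank and $\operatorname{rank}M=d$; since Theorem~\ref{thm-even} gives $\det(M\mid\bar c)=0$, the augmented matrix also has rank $d$ and the system \eqref{eq:sistema2} has a unique solution. The branch $\texttt{delta}=0$ ($\pmb{\Delta_{d}}\equiv0$) is lighter: one sets $s_{\nu+1}=0$, exactly the alternative clause of Definition~\ref{def-suitable-even}, while $s=2m+1$ with $m\geq\nu-1$ from Step~\ref{step:s_nu_even_0} again forces $s\neq0$ and $s\neq\pm s_i$. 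Distinctness of the nodes $s,\pm s_i$ makes the $d-1$ Vandermonde columns of $M_0$ in \eqref{eq:M_par_0} independent, and the standard basis column $e_{d+1}$ is independent of them (the first $d$ coordinates force any relation among them to be trivial), so $\operatorname{rank}M_0=d$ and \eqref{eq:sistema-par-0} is uniquely solvable.

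The main obstacle is turning the merely generic existence supplied by Theorem~\ref{thm-even} into one explicit, certified value of $s$: a single $s$ must simultaneously dodge the roots of several distinct auxiliary polynomials while keeping $\pmb{\Delta_{d}}(s)\neq0$. This is exactly the work done by Lemma~\ref{lema-roots}, and the delicate part is the bookkeeping — verifying that each coincidence $R=s$, $R=\pm s_i$ and $s=\pm s_i$ is governed by the correct polynomial, with the correct signs, so that the bounds $\delta_d,\delta_{d-1,0}^{-},\delta_{d-1,i}^{\pm}$ gathered into $m$ at Step~\ref{step:s_even} genuinely dominate every root that has to be avoided.
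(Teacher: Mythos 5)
Your proof follows the same route as the paper's: in both branches you use Lemma~\ref{lema-roots} to certify that $s=2m+1$ avoids the roots of the auxiliary polynomials $\pmb{\Delta_{d}}(S)$, $\pmb{\Delta_{d-1}}(S)+2S\,\pmb{\Delta_{d}}(S)$ and $\pmb{\Delta_{d-1}}(S)+(S\pm s_i)\pmb{\Delta_{d}}(S)$, so that the nodes $s,\pm s_i,R$ are pairwise distinct and the Vandermonde-type systems \eqref{eq:sistema2} and \eqref{eq:sistema-par-0} are uniquely solvable --- exactly the argument the paper runs via the functions $\Psi_{i,\pm}$. The one place where you are slightly more explicit is the parenthetical claim that these auxiliary polynomials are not identically zero; this is the same tacit assumption the paper makes, so the two proofs coincide in substance.
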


In the following proof we use the notations in Algorithm \ref{alg-eleccion-Si-par}.
\newpage

\begin{proof}

Let $ \left[{\bf s} , \texttt{delta}\right]$  be the returned array of Algorithm \ref{alg-eleccion-Si-par}. First, assume    $\texttt{delta}=1$. Then ${\bf s}:=(s, s_1,\ldots,s_{\nu-1},R)$ with $R= -\dfrac{\pmb{\Delta_{d-1}}(s )}{\pmb{\Delta_{d}}(s )}-s$. For $i=1,\dots , \nu-1$, consider the rational functions:
\begin{equation}
\Psi_{i,+} (S )=- \dfrac{\pmb{\Delta_{d-1}}(S )}{\pmb{\Delta_{d}}(S )}-s_i -S\ , \quad
   \Psi_{i,-} (S )=- \dfrac{\pmb{\Delta_{d-1}}(S )}{\pmb{\Delta_{d}}(S )}+s_i -S\ ,    
\end{equation}
and also
\begin{equation}
\Psi_{0,+} (S )=- \dfrac{\pmb{\Delta_{d-1}}(S )}{\pmb{\Delta_{d}}(S )}-2S   \ .    
\end{equation}
Then, for $i=1,\dots , \nu-1$, we obtain that
\begin{equation}
 \Psi_{i,+} (s )=- \dfrac{\pmb{\Delta_{d-1}}(s )+(s+s_i )\pmb{\Delta_{d}}(s )}{\pmb{\Delta_{d}}(s )} \ , \quad
   \Psi_{i,-} (s )=- \dfrac{\pmb{\Delta_{d-1}}(s )+(s-s_i )\pmb{\Delta_{d}}(s )}{\pmb{\Delta_{d}}(s )} \ ,   
\end{equation}
\[
\Psi_{0,+} (s )=- \dfrac{\pmb{\Delta_{d-1}}(s )-2s\pmb{\Delta_{d}}(s )}{\pmb{\Delta_{d}}(s )} 
\]
are nonzero real numbers, because of Lemma \ref{lema-roots} and the choice made in Step \ref{step:s_even} for $s$. Moreover $R\neq \pm s_i$ for $i=1, \dots , \nu-1$ and also $R\neq s$. 

Then we obtain the vector $\,{\textbf{s} }=(s,s_1,\ldots,s_{\nu-1}, s_{\nu+1} )$ of suitable parameters,  since $s_i\neq \pm s$, for $i<\nu$, by choice in Step \ref{step:s_even}, and $\Psi_{i,\pm} (s )\neq 0$ for all $i$, {and also $s_{\nu+1}=R$ satisfies the third condition according to the Step \ref{step:R_even}}.

Moreover the linear system \eqref{eq:sistema2} has a unique solution, since $s_i\neq \pm s$, for $i=1,\dots ,\nu-1$, and $R\neq \pm s_i$ by choice in Step \ref{step:s_even}.

Now assume    $\texttt{delta}=0$. Then define $s $ as in Step \ref{step:s_nu_even_0}. Hence  $s_i\neq \pm s_j$, for all $i\neq j$, so the linear system \eqref{eq:sistema-par-0} has a unique solution and the point $\textsc{s}=(s, s_1,\ldots,s_{\nu-1} )$ is in the semialgebraic set $\cG$ defined in \eqref{conj-G-even}, in the proof of Theorem \ref{thm-even}.

As a result of all the above we have proved the required statement. 
\end{proof}

\begin{obs}
As a consequence of the previous construction, we obtain that
  ${\textsc{s} }=(s,s_1,\ldots,s_{\nu-1} )$ is in the semialgebraic set $\cG$ defined in \eqref{conj-G-even},  since $s_i\neq \pm s$, for $i<\nu$, by choice in Step \ref{step:s_even}, and $\Psi_{i,\pm} (s )\!\neq\! 0$ for all $i$.
\end{obs}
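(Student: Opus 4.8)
The plan is to mirror the odd-degree argument of Proposition~\ref{correct-odd}, replacing the data of Definition~\ref{def-suitable-odd} by that of Definition~\ref{def-suitable-even} and reading the relevant coefficient bounds off Algorithm~\ref{alg-eleccion-Si-par}. I split the proof according to the switch \texttt{delta}. In each branch I first check that the returned vector ${\bf s}=(s,s_1,\dots,s_{\nu-1},s_{\nu+1})$ satisfies the conditions of Definition~\ref{def-suitable-even}; the unique solvability of the corresponding linear system is then deduced from the pairwise distinctness of the nodes together with the vanishing of $\det(M\mid\bar c)$ already recorded in the proof of Theorem~\ref{thm-even}.

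For the generic branch $\texttt{delta}=1$ (so $\pmb{\Delta_d}(S)\not\equiv0$ and $R=-\pmb{\Delta_{d-1}}(s)/\pmb{\Delta_d}(s)-s$), the integer choices $s_i:=i$ immediately give $s_i\neq0$ and $s_i\neq\pm s_j$ for $i\neq j$, and since $s=2m+1$ with $m\ge\nu-1$ we get $s^*=s\neq0$ and $s_i\neq\pm s$. The core is to control the last node $R$. Following the odd case I introduce the rational functions $\Psi_{i,\pm}(S)=-\pmb{\Delta_{d-1}}(S)/\pmb{\Delta_d}(S)\mp s_i-S$ and $\Psi_{0,+}(S)=-\pmb{\Delta_{d-1}}(S)/\pmb{\Delta_d}(S)-2S$, so that $\Psi_{i,\pm}(s)=R\mp s_i$ and $\Psi_{0,+}(s)=R-s$. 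Each of these vanishes exactly when $s$ is a root of one of the polynomials $\pmb{\Delta_{d-1}}(S)+(S\pm s_i)\pmb{\Delta_d}(S)$ or $\pmb{\Delta_{d-1}}(S)+2S\pmb{\Delta_d}(S)$, and the maxima of the absolute values of the coefficients of these polynomials are precisely the quantities $\delta_{d-1,i}^{\pm}$ and $\delta_{d-1,0}^{-}$ computed by the algorithm. By Lemma~\ref{lema-roots} every root of each of them lies in the disc of radius $2m$, whereas $s=2m+1$ lies strictly outside it; the same estimate applied with $\delta_d$ shows $\pmb{\Delta_d}(s)\neq0$, so that $R$ is well defined. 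Hence $R\neq s$ and $R\neq\pm s_i$, which are exactly the coincidences that must be avoided for the $d$ nodes $s,\pm s_1,\dots,\pm s_{\nu-1},R$ to be pairwise distinct; this yields the suitability of ${\bf s}$ in the sense of Definition~\ref{def-suitable-even}.

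In the degenerate branch $\texttt{delta}=0$ one has $\pmb{\Delta_d}(S)\equiv0$ and $s_{\nu+1}=0$ by construction, and only $s_i\neq0$, $s_i\neq\pm s_j$ and $s_i\neq\pm s$ remain to be checked; all three follow at once from $s_i=i$ and $s=2m+1$. By Lemma~\ref{lema-G-even} the point $\textsc{s}$ then lies in the semialgebraic set $\mathcal{G}$ of \eqref{conj-G-even}. In both branches the pairwise distinctness of the nodes makes the $d$ columns of $M$ (respectively $M_0$) a Vandermonde-type family of rank $d$; since $\det(M\mid\bar c)=0$ as shown in the proof of Theorem~\ref{thm-even} (because $R$ is the missing root of $h^*$ in the first branch, and because $\Delta_d\equiv0$ in the second), the matrices $M$ and $(M\mid\bar c)$ have equal rank $d$ and the system \eqref{eq:sistema2} (respectively \eqref{eq:sistema-par-0}) has a unique solution $\vec\lambda^{*}$.

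The step I expect to be the main obstacle is the application of Lemma~\ref{lema-roots}. The lemma is stated for monic polynomials, whereas $\pmb{\Delta_d}$ and the combinations $\pmb{\Delta_{d-1}}\pm(\cdots)\pmb{\Delta_d}$ are generally non-monic with real coefficients depending on $\vec c$; one must justify that bounding the maximum absolute value of their coefficients still confines all their roots to the disc of radius twice that bound, and that the single value $s=2m+1$ simultaneously escapes the root sets of all the finitely many polynomials involved while keeping $\pmb{\Delta_d}(s)\neq0$. Once this uniform root bound is in place, the verification reduces to the elementary inequalities among the $s_i$, $s$ and $R$ listed above.
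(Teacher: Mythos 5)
Your argument reproduces the paper's own reasoning (the proof of Proposition \ref{correct-even}, of which this remark is stated as a consequence) essentially verbatim: the same auxiliary functions $\Psi_{i,\pm}$ and $\Psi_{0,+}$, the same use of Lemma \ref{lema-roots} with the choice $s=2m+1$ to force $\pmb{\Delta_d}(s)\neq 0$ and $R\neq s,\pm s_i$, and the same reduction of membership in $\mathcal{G}$ to these non-vanishings together with the pairwise distinctness of the nodes and the Vandermonde rank argument. The monic-normalization caveat you raise about Lemma \ref{lema-roots} is a legitimate concern, but it is equally present and equally unaddressed in the paper's proof, so it does not distinguish your route from theirs.
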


\para 

The following result guarantees the correctness of the  Algorithm  \ref{alg-WD-new}.

\begin{theorem}\label{thm-correctness} Let $p(x,y)$ be a real binary form of  degree $d$.
 The output list $ \left[\epsilon, {\bf s} ,{\bf R}, \mathbf{ \lambda} \right]$ from Algorithm \ref{alg-WD-new} applied to $p$ verifies formula \eqref{eq:WD_teo_impar} in Theorem \ref{thm-odd} if $\epsilon=1$, or formula \eqref{eq:WD_teo_par} in Theorem \ref{thm-even} if $\epsilon=0$.
\end{theorem}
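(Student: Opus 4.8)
The plan is to treat this statement as an assembly of results already in place, splitting at the outset on the value of $\epsilon=\mathrm{Remainder}(d,2)$ computed in the first line of Algorithm~\ref{alg-WD-new}. In each parity the whole argument reduces to checking that the data fed into the final linear solve (the matrix, the right-hand side, and the vector $\mathbf{R}$) are exactly those appearing in the relevant case of the proof of Theorem~\ref{thm-odd} or Theorem~\ref{thm-even}, so that the unique solution returned by the algorithm coincides with the coefficient vector $\vec{\lambda}^*$ produced there and the corresponding WD formula holds verbatim.

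First I would treat $\epsilon=1$, so $d=2\nu+1$. The algorithm calls Algorithm~\ref{alg-eleccion-Si-impar} on the coefficient vector $c$; by Proposition~\ref{correct-odd} the returned array $[\mathbf{s},\texttt{delta}]$ provides a vector $\mathbf{s}=(s_1,\dots,s_\nu,s_{\nu+1})$ of suitable parameters for a WD of $p$ and the associated linear system has a unique solution, and by Lemma~\ref{lema-G-odd} the truncated point $\textsc{s}=(s_1,\dots,s_\nu)$ lies in the set $\mathcal{G}$ of \eqref{conj-G-odd}. It then remains to split on \texttt{delta}. If $\texttt{delta}=1$, the algorithm builds $M$ as in \eqref{eq:M_impar} from $\mathbf{s}$ and sets $\mathbf{R}=(1,s_{\nu+1})$, where $s_{\nu+1}=R=-\Delta_{d-1}(\textsc{s})/\Delta_d(\textsc{s})$; this is precisely the matrix and the value of $R$ occurring in the first part of the proof of Theorem~\ref{thm-odd}, so the unique solution $\pmb{\lambda}$ of $M\vec{\lambda}=\bar c$ is the vector $\vec{\lambda}^*$ of that proof and hence satisfies \eqref{eq:WD_teo_impar}. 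If $\texttt{delta}=0$, the construction uses $M_0$ of \eqref{eq:M_impar_0} and $\mathbf{R}=(0,1)$, matching the second part of that proof (the case $\Delta_d\equiv 0$), and again $\pmb{\lambda}$ satisfies \eqref{eq:WD_teo_impar}.

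The case $\epsilon=0$, $d=2\nu$, is entirely parallel, now invoking Proposition~\ref{correct-even}, Lemma~\ref{lema-G-even}, and Theorem~\ref{thm-even}, with $\mathbf{R}=(1,s_{\nu+1})$ and $M$ of \eqref{eq:M_par} when $\texttt{delta}=1$, and $\mathbf{R}=(0,1)$ and $M_0$ of \eqref{eq:M_par_0} when $\texttt{delta}=0$. The only additional care needed here is bookkeeping on the order of the unknowns: in \eqref{eq:M_par} the first column encodes the form $x+s\,y$, so the unique solution of $M\vec{\lambda}=\bar c$ is returned in the order $\pmb{\lambda}=[\lambda_{d-1},\lambda_1,\dots,\lambda_{d-2},\lambda_d]$ announced in the output specification, which is exactly the relabeling $\lambda_{d-1}=\lambda_1^*$, $\lambda_j=\lambda_{j+1}^*$, $\lambda_d=\lambda_d^*$ carried out at the end of the proof of Theorem~\ref{thm-even}. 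Hence the returned data verify \eqref{eq:WD_teo_par}.

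I expect no genuine mathematical obstacle at this stage, since the substantive content has already been discharged in Propositions~\ref{correct-odd} and~\ref{correct-even} (suitability of the chosen parameters and unique solvability of the system) and in Theorems~\ref{thm-odd} and~\ref{thm-even} (that any point of $\mathcal{G}$ yields the stated WD). The delicate part is purely organizational: one must verify that each branch of the two nested conditional constructs in Algorithm~\ref{alg-WD-new} feeds the solver precisely the matrix ($M$ or $M_0$) and the vector $\mathbf{R}$ of the matching case of the relevant theorem, and that the order of the components of $\pmb{\lambda}$ agrees with the order of the linear forms in \eqref{eq:WD_teo_impar} and \eqref{eq:WD_teo_par}. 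Once this correspondence is made explicit, the two formulas follow immediately and the correctness of Algorithm~\ref{alg-WD-new} is established.
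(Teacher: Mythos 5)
Your proposal is correct and follows essentially the same route as the paper: split on the parity $\epsilon$, invoke Propositions \ref{correct-odd} and \ref{correct-even} (together with Lemmas \ref{lema-G-odd}, \ref{lema-G-even}) to place the chosen parameters in $\mathcal{G}$ with a uniquely solvable system, and identify each branch of the algorithm with the corresponding case of the proofs of Theorems \ref{thm-odd} and \ref{thm-even}. Your version is in fact slightly more careful than the paper's, which leaves the branch-matching and the reordering of the $\lambda_j$ in the even case implicit.
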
 

 In the following proof we use the notations in Algorithm \ref{alg-WD-new}.

\begin{proof} First assume $d$ is an odd number. From Proposition \ref{correct-odd},  we obtain a point ${\bf s}$ in $\cG$, defined in \eqref{conj-G-odd}, such that the corresponding linear system \eqref{eq:sistema1} or \eqref{eq:d-impar-0}, according to \texttt{delta}, has an unique solution. Analogously, for even degree $d$, we consider the linear system \eqref{eq:sistema2} or \eqref{eq:sistema-par-0}. By Proposition \ref{correct-even}, it has a unique solution, since $\textbf{s}=(\textsc{s},R)$ is a vector of suitable parameters for $p$ and $\textsc{s}$ is in the set $\cG$ from \eqref{conj-G-even}. Hence Algorithm \ref{alg-WD-new} computes a WD of length at most $d$ for any real binary form.
\end{proof}

\para 
\begin{obs} Observe that Algorithm \ref{alg-WD-new} computes a WD of length at most $d$ for any real binary form of any degree $d$. Moreover, Theorem \ref{thm-odd} and Theorem \ref{thm-even} also describe a semialgebraic set where we can choose the appropriate parameters to construct such a WD for each real binary form $p$.
\end{obs}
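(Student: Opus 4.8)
The plan is to derive the correctness of the master routine entirely from the two subroutine-correctness statements, Proposition \ref{correct-odd} and Proposition \ref{correct-even}, together with the existence theorems, Theorem \ref{thm-odd} and Theorem \ref{thm-even}. Since Algorithm \ref{alg-WD-new} branches on the value of $\epsilon = \textrm{Remainder}(d,2)$, I would split the argument into the odd and the even case and run them in parallel, each reducing to an already-proved result.

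First I would treat the odd case, $\epsilon = 1$. Here Algorithm \ref{alg-WD-new} invokes Algorithm \ref{alg-eleccion-Si-impar} to produce the pair $[\mathbf{s},\texttt{delta}]$. By Proposition \ref{correct-odd} the returned $\mathbf{s}=(s_1,\dots,s_\nu,s_{\nu+1})$ is a vector of suitable parameters for a WD of $p$ in the sense of Definition \ref{def-suitable-odd}, and Lemma \ref{lema-G-odd} then places its truncation $\textsc{s}=(s_1,\dots,s_\nu)$ in the semialgebraic set $\cG$ of \eqref{conj-G-odd}; this is exactly the hypothesis under which Theorem \ref{thm-odd} was established. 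It then remains only to check that the data assembled by Algorithm \ref{alg-WD-new} coincide with those in the proof of Theorem \ref{thm-odd}: if $\texttt{delta}=1$ the algorithm builds $M$ as in \eqref{eq:M_impar} and sets $\mathbf{R}=(1,s_{\nu+1})$, matching the nondegenerate branch $\Delta_d\not\equiv 0$; if $\texttt{delta}=0$ it builds $M_0$ as in \eqref{eq:M_impar_0} and sets $\mathbf{R}=(0,1)$, matching the branch $\Delta_d\equiv 0$. In either branch Proposition \ref{correct-odd} guarantees that the overdetermined system \eqref{eq:sistema1} (resp. \eqref{eq:d-impar-0}) has a \emph{unique} solution $\pmb{\lambda}$, so the output $[\,1,\mathbf{s},\mathbf{R},\pmb{\lambda}\,]$ is precisely the decomposition exhibited in Theorem \ref{thm-odd}, i.e. it satisfies \eqref{eq:WD_teo_impar}.

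The even case, $\epsilon=0$, proceeds identically, replacing Algorithm \ref{alg-eleccion-Si-impar}, Proposition \ref{correct-odd}, Lemma \ref{lema-G-odd}, the matrices \eqref{eq:M_par}, \eqref{eq:M_par_0} and the systems \eqref{eq:sistema2}, \eqref{eq:sistema-par-0}, and Theorem \ref{thm-even} by their even-degree analogues; here the suitable-parameter vector is $\mathbf{s}=(s,s_1,\dots,s_{\nu-1},s_{\nu+1})$, Lemma \ref{lema-G-even} locates $\textsc{s}$ in the set \eqref{conj-G-even}, and Proposition \ref{correct-even} again yields a unique $\pmb{\lambda}$, whence \eqref{eq:WD_teo_par} is read off.

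The argument is essentially bookkeeping: the genuinely substantive content -- that the system is consistent (because $\det(M\,|\,\bar{c})=0$, equivalently the chosen value of $R$ is a root of the associated univariate polynomial $h^{*}$) while the coefficient matrix retains full column rank $d$ -- has already been discharged inside the proofs of Theorems \ref{thm-odd} and \ref{thm-even} and in Propositions \ref{correct-odd}, \ref{correct-even}. The obstacle I anticipate is therefore not analytic but notational: one must confirm that the indexing convention used by Algorithm \ref{alg-WD-new} when it reports $\pmb{\lambda}$ -- in particular the reordering in the even case, where the output specification lists $\lambda_{d-1}$ first, reflecting the relabeling $\lambda_{d-1}=\lambda_1^{*}$, $\lambda_j=\lambda_{j+1}^{*}$ made in the proof of Theorem \ref{thm-even} -- aligns with the permutation of the linear forms $L_j$ in \eqref{eq:WD_teo_par}, so that each reported coefficient is attached to the correct power $(x\pm s_j y)^d$, $(x+s y)^d$ or $(R_1 x+R_2 y)^d$.
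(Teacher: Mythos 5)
Your proposal is correct and follows essentially the same route as the paper: the paper's own justification (Theorem \ref{thm-correctness}) likewise splits on the parity of $d$, invokes Proposition \ref{correct-odd} (resp.\ Proposition \ref{correct-even}) to place the computed parameters in the semialgebraic set $\cG$ of \eqref{conj-G-odd} (resp.\ \eqref{conj-G-even}) and to get a unique solution of the corresponding linear system, and then reads off the decomposition from Theorem \ref{thm-odd} (resp.\ Theorem \ref{thm-even}). Your additional bookkeeping about matching \texttt{delta} to the branches $\Delta_d\not\equiv 0$ versus $\Delta_d\equiv 0$ and about the reindexing of $\pmb{\lambda}$ in the even case is consistent with, and slightly more explicit than, what the paper writes.
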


Next we present  an example to illustrate our results.

\para 

\noindent \textbf{\textsc{A detailed example}}
\para
\label{sec-example}

 \label{ex_parameters_5}
Next consider    the  binary form 
\begin{equation}
p_a (x,y)=x^5+10x^4y+10x^3y^2+10x^2y^3+5axy^4+y^5\,  , \ \textrm{ for } a= 2 , 1.
\end{equation}
Running Algorithm \ref{alg-WD-new} we obtain:
\begin{itemize}
    \item For $a=2$:
    \begin{enumerate}[1.]
    \item  Set $\nu:=2$, $\epsilon:=1$ and $\sc{c}:=(1,2,1,1,2,1)$.
    \item Call the Algorithm \ref{alg-eleccion-Si-impar} that gives us the list $[{\bf s},\texttt{delta} ]=[(1,25,625),1]$.
   
        \item From Algorithm \ref{alg-WD-new}, Step 5, we  define the linear system
        $$
        \begin{pmatrix}
        1&1&1&1&1\\
        1&-1&25&-25&625\\
        1&1&625&625&390625\\
        1&-1&15625&-15625&244140625\\
        1&1&390625&390625&152587890625\\
        1&-1&9765625&-9765625&95367431640625\\
        \end{pmatrix}
        \begin{pmatrix}
        \lambda_1\\ \lambda_2\\ \lambda_3\\
        \lambda_4\\ \lambda_5\\
        \end{pmatrix}
        =
    \begin{pmatrix}
        1\\2\\1\\1\\2\\ 1   
    \end{pmatrix}
        $$
          
        \item Define  ${\bf{R}}=(1,625)$.
        \item RETURN 
        \[
       \textstyle{ \left[1,(1,25,625),(1,625),\left(\frac{1168753}{778752},-\frac{130417}{260416},-\frac{601}{18720000},\frac{217}{6760000},\frac{1}{152343360000}\right)\right]}
        \] 
 Therefore,
 \[
 \begin{array}{rl}
       p_2 (x,y) =& \dfrac{1168753}{778752} (x + y)^5- \dfrac{130417}{260416} (x - y)^5 + \dfrac{-601}{18720000} (x + 25 y)^5 +\\
       & \ \\
       &\dfrac{217}{6760000} (x - 25 y)^5 + \dfrac{1}{152343360000}(x+625 y)^5 \ .
      \end{array}
\]
            
\end{enumerate}
\item  For $a=1$:
    \begin{enumerate}[1.]
    \item  Set $\nu:=2$, $\epsilon:=1$ and $\sc{c}:=(1,1,1,1,2,1)$.
    \item Call the Algorithm \ref{alg-eleccion-Si-impar} that gives us the list $[{\bf s},\texttt{delta} ]=[(1,17,0),0]$.
    \item From Algorithm \ref{alg-WD-new}, Step 8, we  define the linear system
    $$
        \begin{pmatrix}
        1&1&1&1&0\\
        1&-1&17&-17&0\\
        1&1&289&289&0\\
        1&-1&4913&-4913&0\\
        1&1&83521&83521&0\\
        1&-1&1419857&-1419857&1\\
        \end{pmatrix}
        \begin{pmatrix}
        \lambda_1\\ \lambda_2\\ \lambda_3\\
        \lambda_4\\ \lambda_5\\
        \end{pmatrix}
        =
    \begin{pmatrix}
        1\\2\\1\\1\\1\\ 1   
    \end{pmatrix}
        $$
        \vspace{-4mm}
        \item Define  ${\bf R}=(0,1)$.
        \item RETURN \[\left[1,(1,17,0),(0,1),\left(\dfrac{865}{576},\dfrac{-289}{576},\dfrac{-1}{9792},\dfrac{1}{9792},289\right)\right]
        \]
     \end{enumerate} 
           
\end{itemize}        
        \[ \textrm{And so, }
        p_1 (x,y)=\frac{865}{576} (x + y)^5- \frac{289}{576} (x - y)^5 -\frac{ 1}{9792} (x + 17 y)^5 + \frac{ 1}{9792} (x - 17 y)^5 + 289 y^5 \ .
        \]

Observe that we have chosen quite restrictively the type of linear forms for the  Waring decompositions, since we take $s_i$ and $-s_i$ in our constructions. Sometimes, under this condition, it is not possible to find a shorter WD, but,  removing this restriction, we can find an improved one. For instance, 
$$
p_1(x,y)=\frac{1}{11}\left[ \ \lambda_1(x+\alpha_1 y)^5+\overline{\lambda}_1 (x+\overline{\alpha}_1 y)^5+\lambda_3(x+\alpha_3 y)^5+\overline{\lambda}_3 (x+\overline{\alpha}_3  y)^5 \ \right],
$$
with $\lambda_1=-6+\frac{14}{3}\sqrt{3} $, $\alpha_1=\frac{1}{2}(1+\sqrt{3}) $, $\lambda_3=-\frac{23}{2}+\frac{51}{10}\sqrt{5} $ and $\alpha_3=\frac{1}{2}(-1+\sqrt{5}) $, using the notation $\overline{w}=m-n\sqrt{p}$, when  $ w=m+n\sqrt{p}$.

\begin{obs}
For $a\neq 1$,  we receive $[{\bf s},\texttt{delta} ]=\left[\left(1,s_2,\frac{s_2^2}{a-1}\right),1\right]$ (see  Algorithm \ref{alg-eleccion-Si-impar}). Next, we must define the matrix $M$ as in \eqref{eq:M_impar} by means of the elements of the list {\bf s}, and then solve the corresponding linear system \eqref{eq:sistema1}. All these processes can be done symbolically.
\end{obs}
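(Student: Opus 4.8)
The plan is to execute Algorithm \ref{alg-eleccion-Si-impar} on $p_a$ symbolically, carrying $a$ as a parameter, and to reduce the whole question to the two coefficient polynomials that govern the algorithm's branch. First I would read off the coefficient vector: matching $p_a$ against $\sum_{i=0}^{5}\binom{5}{i}c_i\,x^i y^{5-i}$ gives $\vec c=(c_0,\dots,c_5)=(1,a,1,1,2,1)$, so $d=5$ and $\nu=2$. Because the algorithm sets $s_i:=i$ for $i=1,\dots,\nu-1=1$, it fixes $s_1=1$ and leaves $S_2$ as the only free variable; hence everything hinges on the two univariate polynomials $\pmb{\Delta_5}(S_2)=\Delta_5(1,S_2)$ and $\pmb{\Delta_4}(S_2)=\Delta_4(1,S_2)$ obtained from $\det V$ in \eqref{eq-det-odd}, with $V$ as in \eqref{V_impar}.

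The core of the argument, and the step I expect to be the main (if routine) obstacle, is the explicit evaluation of these two determinants as polynomials in $s:=S_2$ with the parameter $a$. Setting $S_1=1$ makes the two $\pm S_1$ columns of the $6\times6$ matrix $V$ equal to the powers of $1$ and of $-1$, so subtracting the first Vandermonde column from the remaining ones produces entries of the form $s^k-1$ together with the constant differences coming from the fixed last column $(c_5,\dots,c_0)^t=(1,2,1,1,a,1)^t$. Factoring $(s^2-1)$ repeatedly out of the resulting columns collapses each relevant minor to a $3\times3$ determinant, and using $\Delta_i=(-1)^i M_{i+1,1}$ from the cofactor expansion along the first column of $V$ I expect to obtain
\begin{equation*}
\pmb{\Delta_5}(s)=-4(a-1)\,s\,(s^2-1)^2,\qquad \pmb{\Delta_4}(s)=4\,s^3(s^2-1)^2 .
\end{equation*}

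With these two formulas the remark is immediate. For $a\neq1$ the polynomial $\pmb{\Delta_5}(S_2)$ is not identically zero, so Algorithm \ref{alg-eleccion-Si-impar} enters its else-branch and sets $\texttt{delta}=1$; the quotient computed there is
\begin{equation*}
R=-\frac{\pmb{\Delta_4}(s_2)}{\pmb{\Delta_5}(s_2)}=-\frac{4\,s_2^{3}(s_2^2-1)^2}{-4(a-1)\,s_2\,(s_2^2-1)^2}=\frac{s_2^{2}}{a-1}
\end{equation*}
for the value $s_2=s_\nu=2m+1$ selected in Step \ref{step:s_nu_odd}, so the returned array is exactly $[(1,s_2,\tfrac{s_2^2}{a-1}),1]$. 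I would also note that the complementary case $a=1$ makes $\pmb{\Delta_5}\equiv0$, forcing $\texttt{delta}=0$; this both explains the hypothesis $a\neq1$ and matches the output $[(1,17,0),0]$ computed earlier. Finally, assembling $M$ as in \eqref{eq:M_impar} from $(1,s_2,R)$ gives a matrix whose entries are powers of $1,-1,s_2$ and $R=s_2^2/(a-1)$, hence rational in $a$ and $s_2$; by Proposition \ref{correct-odd} the system \eqref{eq:sistema1} with right-hand side $(1,2,1,1,a,1)^t$ is uniquely solvable, so its solution $\pmb\lambda$ is a rational function of $a$ and $s_2$. This confirms that every step can be performed symbolically, as asserted.
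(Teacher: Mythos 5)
Your proposal is correct and takes essentially the same route the paper intends: the remark is simply a report of running Algorithm \ref{alg-eleccion-Si-impar} symbolically on $p_a$, and you reconstruct that run explicitly, with the key determinant evaluations $\pmb{\Delta_{5}}(s)=-4(a-1)\,s\,(s^2-1)^2$ and $\pmb{\Delta_{4}}(s)=4\,s^3(s^2-1)^2$ being correct (I verified them by cofactor expansion of \eqref{V_impar} with $S_1=1$ and $\bar c=(1,2,1,1,a,1)^t$). These formulas yield exactly the branch $\texttt{delta}=1$ for $a\neq1$, the value $R=s_2^2/(a-1)$, and the degeneration $\pmb{\Delta_5}\equiv0$ at $a=1$, all consistent with the paper's computed instances $[(1,25,625),1]$ for $a=2$ and $[(1,17,0),0]$ for $a=1$.
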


\begin{obs}
Note that for fixed  value of $a$, we can determine a real  value $s_2$ such that $\{ 1 \} \times [s_2 , \infty ) \subset \cG$, with $\cG$ the semialgebraic set defined in \eqref{conj-G-odd} depending on $p_a$. For instance, for $a=1$ we have $\{ 1 \} \times [17,\infty)\subset \mathcal{G}$, and $\{ 1 \} \times [25,\infty)\subset \mathcal{G}$ for $a=2$.

This gives, in general, a way to find the values of $\textsc{s}\in\mathcal{G}$. Certainly, there are smaller values of $s_2$ that are also in $\mathcal{G}$. For instance, the vector $\textbf{s}=(\textsc{s},R)=(1,2,\frac{4}{a-1})$ is also a vector of suitable parameters, but our  bound  guarantees  $\{ 1 \} \times [s_2 ,\infty)\subset\mathcal{G}$ for those $s_2$, and accordingly the associated linear system for these values has a unique solution. 

In fact, running Algorithm \ref{alg-WD-new} over "a symbolic ring", we obtain the one-parameter WD:
\begin{align*}
p_2(x,y)=&\frac{3s_2^4-5s_2^2+3}{2(s_2^2-1)^2}(x+y)^5- \frac{s_2^4+s_2^2+1}{2(s_2^2+1)(s_2^2-1)}(x-y)^5-
\\
&\frac{s_2^2-s_2+1}{2s_2^2(s_2-1)^2(s_2+1)}(x+s_2\,y)^5+
\frac{s_2^2+s_2+1}{2s_2^2(s_2-1)(s_2+1)^2}(x-s_2\,y)^5 +\\
&\frac{1}{s_2^2(s_2^2+1)(s_2-1)^2(s_2+1)^2}(x+s_2^2\,y)^5 \ ,    
\end{align*}
for any real number $s_2$ such that $ s_2  \neq 0 , \pm 1$. 
\end{obs}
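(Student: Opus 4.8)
The plan is to read each assertion off the description \eqref{conj-G-odd} of $\mathcal{G}$, restricted to the line $\{1\}\times\mathbb{R}$, importing the root bound of Lemma \ref{lema-roots} to produce the threshold. Here $d=5$, $\nu=2$, $s_1=1$ is fixed, and I consider the line $\{(1,t):t\in\mathbb{R}\}$ in the parameter space $\mathbb{R}^\nu$; write $\pmb{\Delta_5}(S_2)=\Delta_5(1,S_2)$ and $\pmb{\Delta_4}(S_2)=\Delta_4(1,S_2)$, polynomials in $S_2$ with integer coefficients, hence with leading coefficient of absolute value at least $1$. I first treat the generic branch $\Delta_5\not\equiv 0$ (where \texttt{delta}$=1$, e.g. $a=2$) and prove $\{1\}\times[2m+1,\infty)\subset\mathcal{G}$ for the integer $m$ computed in Step \ref{step:s_nu_odd} of Algorithm \ref{alg-eleccion-Si-impar}.

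Unwinding $\mathcal{G}=\Omega\setminus(A\cup B\cup D)$ along $S_1=1$: since $m\geq\nu-1=1$, any $t\geq 2m+1\geq 3$ obeys $t\neq 0,\pm 1$, so $(1,t)\notin A\cup D$. For $\Omega$ and $B$ I would apply Lemma \ref{lema-roots} to the five one-variable polynomials $\pmb{\Delta_5}$, $\pmb{\Delta_4}\mp\pmb{\Delta_5}$ and $\pmb{\Delta_4}\mp S_2\,\pmb{\Delta_5}$, which govern respectively the factor $\{\Delta_5=0\}$ and the four sheets $\{\Delta_4\pm S_i\Delta_5=0\}$ of $B$ along the line. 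Dividing each by its (integer, of modulus $\geq 1$) leading coefficient cannot increase the coefficient bounds, so by the definition of $m$ all their real roots lie in $[-2m,2m]$; as $t\geq 2m+1>2m$, none vanishes at $S_2=t$, giving $(1,t)\in\Omega\setminus B$, hence $(1,t)\in\mathcal{G}$. This is exactly the estimate already used to justify $s_\nu=2m+1$ in Proposition \ref{correct-odd}, now read uniformly over the whole ray rather than at the single returned value. In the degenerate branch $\Delta_5\equiv 0$ (i.e. \texttt{delta}$=0$, e.g. $a=1$) the sets $\Omega$ and $B$ play no role and $\mathcal{G}$ reduces to $\mathbb{R}^\nu\setminus(A\cup D)$, so the ray lies in $\mathcal{G}$ as soon as $t\neq 0,\pm 1$, which again holds on $[2m+1,\infty)$.

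The two numerical thresholds then follow by evaluating $\pmb{\Delta_5},\pmb{\Delta_4}$ for each $a$ and reading off $m$: one gets $m=8$ (so $2m+1=17$) for $a=1$, and $m=12$ (so $2m+1=25$) for $a=2$. That the smaller vector $\mathbf{s}=(1,2,\tfrac{4}{a-1})$ (for $a\neq 1$) is already a vector of suitable parameters is a direct check against Definition \ref{def-suitable-odd}: $s_1=1,s_2=2$ are nonzero and $1\neq\pm 2$; a short evaluation gives $\Delta_5(1,2)\neq 0$ and $-\Delta_4(1,2)/\Delta_5(1,2)=\tfrac{4}{a-1}$, in agreement with the value $s_2^2/(a-1)$ recorded in the preceding remark, so the third bullet holds; and $R=\tfrac{4}{a-1}\neq\pm 1,\pm 2$ for the relevant $a$ (e.g. $R=4$ when $a=2$). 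By Lemma \ref{lema-G-odd} this places $(1,2)$ in $\mathcal{G}$, and since $2<17,25$ the threshold $2m+1$ is indeed not sharp.

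Finally, for the one-parameter decomposition of $p_2$ I would run the construction of Theorem \ref{thm-odd} keeping $s_2$ symbolic: for $a=2$ one finds $R=s_2^2$, and solving the consistent $6\times 5$ system $M\vec{\lambda}=\bar c$ of \eqref{eq:sistema1} over the field $\mathbb{R}(s_2)$ yields the five displayed coefficients. Because that system is, by the derivation of \eqref{eq:WD_impar}, exactly the coefficient-matching of $p_2=\sum_j\lambda_j L_j^5$, its solution is automatically a WD and no separate expansion is needed; the restriction to $s_2\neq 0,\pm 1$ is read off the denominators $(s_2^2-1)^2$, $(s_2^2+1)(s_2^2-1)$, $s_2^2(s_2-1)^2(s_2+1)$, $s_2^2(s_2-1)(s_2+1)^2$ and $s_2^2(s_2^2+1)(s_2-1)^2(s_2+1)^2$, which are nonzero precisely there (note $s_2^2+1>0$). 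The main obstacle is bookkeeping rather than ideas: the root bound is immediate once the monic normalization is noted, and everything else reduces to an explicit—if lengthy—evaluation of $\Delta_4,\Delta_5$ and symbolic solution of the linear system.
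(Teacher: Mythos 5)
Your proposal is correct and follows essentially the paper's own route: the thresholds $17$ and $25$ are exactly the values $2m+1$ computed by Algorithm \ref{alg-eleccion-Si-impar} ($m=8$ for $a=1$ from the coefficients of $\pmb{\Delta_{4}}(S_2)=\pm 4S_2^3(S_2^2-1)^2$, and $m=12$ for $a=2$), and your uniform reading of the Lemma \ref{lema-roots} root bound along the whole ray, the check of $(1,2,\tfrac{4}{a-1})$ against Definition \ref{def-suitable-odd} via Lemma \ref{lema-G-odd}, and the symbolic solution of \eqref{eq:sistema1} with $R=s_2^2$ are precisely the justification the paper supplies through Proposition \ref{correct-odd} and the worked example. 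Your explicit monic normalization before invoking Lemma \ref{lema-roots} (legitimate here since the restricted $\Delta_4,\Delta_5$ have integer coefficients) is a detail the paper leaves implicit but does not constitute a different approach.
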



\bibliographystyle{siam}
\bibliography{bibliography.bib}

\begin{thebibliography}{10}

\bibitem{AH}
{\sc J.~Alexander and A.~Hirschowitz}, {\em Polynomial interpolation in several
  variables}, Journal of Algebraic Geometry, 4 (1995).

\bibitem{BB}
{\sc E.~Ballico and A.~Bernardi}, {\em Real and complex rank for real symmetric
  tensors with low ranks}, Algebra, Hindawi Publishing Corporation, 2013,
  Article ID 794054 (2013).

\bibitem{BPR}
{\sc S.~Basu, R.~Pollack, and M.-F. Roy}, {\em Algorithms in Real Algebraic
  Geometry}, vol.~10, Springer-Verlag Berlin Heidelberg, 2016.

\bibitem{BFPT1}
{\sc M.~R. Bender, J.-C. Faug\`{e}re, L.~Perret, and E.~Tsigaridas}, {\em A
  superfast randomized algorithm to decompose binary forms}, in Proceedings of
  the ACM on International Symposium on Symbolic and Algebraic Computation,
  ISSAC '16, New York, NY, USA, 2016, ACM, pp.~79--86.

\bibitem{BFPT2}
\leavevmode\vrule height 2pt depth -1.6pt width 23pt, {\em A nearly optimal
  algorithm to decompose binary forms}, 2018.

\bibitem{BC}
{\sc A.~Bernardi and I.~Carusotto}, {\em Algebraic geometry tools for the study
  of entanglement: an application to spin squeezed states}, Journal of Physics
  A: Mathematical and Theoretical, 45 (2012), p.~105304.

\bibitem{Ble}
{\sc G.~Blekherman}, {\em Typical real ranks of binary forms}, Found. Comput.
  Math., 15 (2015), pp.~793--798.

\bibitem{BCR}
{\sc J.~Bochnak, M.~Coste, and M.-F. Roy}, {\em Real algebraic geometry},
  vol.~36, Springer Science \& Business Media, 2013.

\bibitem{BCG}
{\sc M.~Boij, E.~Carlini, and A.~Geramita}, {\em Monomials as sums of powers:
  the real binary case}, P. Am. Math. Soc., 139 (2011), pp.~3039--3043.

\bibitem{Boy}
{\sc C.~B. Boyer}, {\em A History of Mathematics}, New York : John Wiley \&
  Sons, Inc.,, 1968.

\bibitem{BCM}
{\sc J.~Brachat, P.~Comon, B.~Mourrain, and E.~Tsigaridas}, {\em Symmetric
  tensor decomposition}, Linear Algebra Appl., 433 (2010), pp.~1851--1872.

\bibitem{BM}
{\sc L.~Brustenga I~Moncusí and S.~K. Masuti}, {\em On the waring rank of
  binary forms: The binomial formula and a dihedral cover of rank two forms},
  arXiv preprint arXiv:1901.08320,  (2019).

\bibitem{CCG}
{\sc E.~Carlini, M.~Catalisano, and A.~V. Geramita}, {\em The solution to the
  {Waring} problem for monomials and the sum of coprime monomials}, J. Algebra,
  370 (2012), pp.~5--14.

\bibitem{CR}
{\sc A.~Causa and R.~Re}, {\em On the maximum rank of a real binary form}, Ann.
  Mat., 190 (2011), pp.~55--59.

\bibitem{CGLM}
{\sc P.~Comon, G.~Golub, L.~Lim, and B.~Mourrain}, {\em Symmetric tensors and
  symmetric tensor rank}, SIAM Journal on Matrix Analysis and Applications, 30
  (2008), pp.~1254--1279.

\bibitem{CLQY}
{\sc P.~Comon, L.-H. Lim, Y.~Qi, and K.~Ye}, {\em Topology of tensors ranks},
  (2018).

\bibitem{CM}
{\sc P.~Comon and B.~Mourrain}, {\em Decomposition of quantics in sums of
  powers of linear forms}, Signal Processing, 53 (1996), pp.~93--107.

\bibitem{CO}
{\sc P.~Comon and G.~Ottaviani}, {\em On the typical rank of real binary
  forms}, Linear Multilinear A., 60 (2012), pp.~657--667.

\bibitem{FLOS}
{\sc R.~Fr\"oberg, S.~Lundqvist, A.~Oneto, and B.~Shapiro}, {\em Algebraic
  stories from one and from the other pockets},  (2018).

\bibitem{GMKP}
{\sc I.~Garc\'{\i}a-Marco, P.~Koiran, and T.~Pecatte}, {\em Reconstruction
  algorithms for sums of affine powers}, in Proceedings of the 2017 ACM on
  International Symposium on Symbolic and Algebraic Computation, ISSAC '17, New
  York, NY, USA, 2017, ACM, pp.~317--324.

\bibitem{KB}
{\sc T.~Kolda and B.~Bader}, {\em Tensor decompositions and applications}, SIAM
  Review, 51 (2009), pp.~455--500.

\bibitem{La}
{\sc J.~M. Landsberg}, {\em Tensors: Geometry and Applications}, vol.~128 of
  Graduate Studies in Mathematics, American Mathematical Society, 2012.

\bibitem{Loj}
{\sc S.~{\L}ojasiewicz}, {\em Introduction to Complex Analytic Geometry}, Ed.
  Birkh\"auser, 1991.

\bibitem{P}
{\sc K.~Pratt}, {\em Faster algorithms via waring decompositions}, 2018.

\bibitem{Re}
{\sc B.~Reznick}, {\em Laws of inertia in higher degree binary forms},
  Proceedings of the American Mathematical Society, 138 (2010), pp.~815--826.

\bibitem{Re3}
{\sc B.~Reznick}, {\em Some new canonical forms for polynomials}, Pac. J.
  Math., 266 (2013), pp.~185--220.

\bibitem{Tok}
{\sc N.~Tokcan}, {\em On the {Waring} rank of binary forms}, Linear Algebra
  App., 524 (2017), pp.~250--262.

\bibitem{Wa}
{\sc E.~Waring and D.~Weeks}, {\em Meditationes algebraicae: An English
  translation of the work of Edward Waring}, Providence, R.I: American
  Mathematical Society, 1991.

\end{thebibliography}


\end{document}